\DeclareMathOperator{\oi}{\mathbf{i}}
\DeclareMathOperator{\oj}{\mathbf{j}}
\DeclareMathOperator{\ok}{\mathbf{k}}
 \DeclareMathOperator{\RE}{Re}
 \newcommand{\ROM}[1]{\mathrm{\uppercase\expandafter{\romannumeral#1}}}
  \theoremstyle{definition}
   \numberwithin{equation}{section} \theoremstyle{plain}
 \newtheorem{thm}{Theorem}[section]
\numberwithin{equation}{section} \theoremstyle{plain}
\newtheorem{prop}[thm]{Proposition}
\newtheorem{lem}[thm]{Lemma}
\newtheorem{rem}[thm]{Remark}
\newtheorem{ack}{Acknowledgements}   
  \numberwithin{equation}{section}
\title[Extrinsic geometry of Gromoll-Meyer sphere]{\textbf{Extrinsic geometry of the Gromoll-Meyer sphere}}
\author[Chao Qian]{Chao Qian}\address{School of Mathematics and Statistics, Beijing Institute of Technology, Beijing
100081, P.R. China}
\email{6120150035@bit.edu.cn}
\author[Z. Z. Tang]{Zizhou Tang}\address{Chern Institute of Mathematics, Nankai University, Tianjin 300071, P. R. China}
\email{zztang@nankai.edu.cn}
\author[W. J. Yan]{Wenjiao Yan$^{\dag}$}\address{School of Mathematical Sciences, Laboratory of Mathematics and Complex Systems, Beijing Normal University, Beijing, 100875, P. R. China}
\email{wjyan@bnu.edu.cn}
\thanks {$^{\dag}$ the corresponding author}
\thanks {The project is partially supported by the NSFC (No.11722101, 11871282, 11931007), Nankai Zhide foundation, the Fundamental Research Funds for the Central Universities and and Beijing Institute of Technology Research Fund Program for Young Scholars.	}
\subjclass[2010]{ 53C12, 53C20, 53C40.}
\keywords{Isoparametric hypersurface, totally geodesic hypersurface, exotic sphere, sectional curvature.}
\begin{document}

\maketitle

\begin{abstract}
Among a family of $2$-parameter left invariant metrics on $Sp(2)$, we determine which have nonnegative sectional curvatures and which are Einstein.
On the quotient $\widetilde{N}^{11}=(Sp(2)\times S^4)/S^3$, we construct a homogeneous isoparametric foliation with isoparametric hypersurfaces diffeomorphic to $Sp(2)$.
Furthermore, on the quotient $\widetilde{N}^{11}/S^3$, we construct a transnormal system with transnormal hypersurfaces diffeomorphic to the Gromoll-Meyer sphere $\Sigma^7$. Moreover,
the induced metric on each hypersurface has positive Ricci curvature and quasi-positive sectional curvature simultaneously.

\end{abstract}
\section{Introduction}
Since Milnor's discovery \cite{Mil56} of exotic $7$-spheres in 1956, the study of the exotic sphere has been one of the most intriguing problems in topology and Riemannian geometry. Exotic spheres are smooth manifolds which are homeomorphic but not diffeomorphic to standard spheres.
In 1974, Gromoll-Meyer \cite{GM74} produced the first example of an exotic sphere with a metric of non-negative sectional curvature, the so-called Gromoll-Meyer sphere. Recall the biquotient construction of the Gromoll-Meyer sphere as follows.

Let $Sp(2):=\Big\{Q=\Big(\begin{array}{cc}
          a& b\\
          c& d
        \end{array}\Big)\in M(2,\mathbb{H})|~QQ^*=I \Big\}$,
where $Q^*$ is the conjugate transpose of $Q$, and $\mathbb{H}$ is the algebra of quaternions which is identified with $\mathbb{R}^4$.
The Gromoll-Meyer sphere is obtained as the quotient of the following free action of $S^3=Sp(1)$ on $Sp(2)$:
\begin{eqnarray}\label{S3 action on sp2}
\phi_0:  S^3\times Sp(2) &\longrightarrow& \quad Sp(2) \\
  p,\quad Q \quad&\longmapsto& \Big(\begin{array}{cc}
          p& \\
          & p
        \end{array}\Big)Q\Big(\begin{array}{cc}
          \overline{p}& \\
          & 1
        \end{array}\Big),\nonumber
\end{eqnarray}
where $\overline{p}$ denotes the conjugate of $p$. In other words, $\Sigma^7=Sp(2)/\sim_{\phi_0}$ via regarding $Q\sim_{\phi_0} \big(\begin{array}{cc}
          p& \\
          & p
        \end{array}\big)Q\big(\begin{array}{cc}
          \overline{p}& \\
          & 1
        \end{array}\big)$.
If we denote by $\pi_0: Sp(2)\rightarrow \Sigma^7$ the projection induced by $\phi_0$, then $\pi_0$ is a submersion. For the bi-invariant metric on $Sp(2)$, it is clear that $\phi_0$ is an isometric action and the induced metric on $\Sigma^7$ has nonnegative sectional curvature.

Inspired by the construction of the Gromoll-Meyer sphere, the present paper studies the extrinsic geometry of $Sp(2)$ and $\Sigma^7$ by realizing them as isoparametric hypersurfaces and transnormal hypersurfaces in certain ambient spaces, respectively.

The isoparametric theory initiated from the study of E. Cartan in real space forms in 1940s, which caught much caution recently as the accomplishment of the classifications of isoparametric hypersurfaces in the unit sphere (\cite{CCJ07}, \cite{Chi13}, \cite{Chi16}, \cite{DN85}, \cite{Miy13}, \cite{Miy16}).
The applications of isoparametric theory are more and more abundant, see for example \cite{GTY20}, \cite{QTY13}, \cite{TXY14}, \cite{TY13} and \cite{TY15}.

By definition, a function $F: N\rightarrow \mathbb{R}$ on a Riemannian manifold $N$ is called \emph{transnormal} if there exists a smooth function $b$ on $\mathbb{R}$ such that $|\nabla^NF|^2=b(F)$, and \emph{isoparametric} if in addition there exists another continuous function $a$ on $\mathbb{R}$ such that $\Delta^N F=a(F)$, where $\nabla^N$ and $\Delta^N$ denote the gradient and Laplacian on $N$, respectively. The corresponding regular level sets of a transnormal function (respectively an isoparametric function) are called transnormal hypersurfaces (respectively isoparametric hypersurfaces) in $N$. The singular level sets are proved to admit manifold structures by \cite{Wan87}, and called focal submanifolds of the transnormal (isoparametric) hypersurfaces in $N$.

First, we establish and study the curvature properties of a $2$-parameter family of left invariant metrics $g_r$ on $Sp(2)$. Let $\mathfrak{sp}(2)$ be the Lie algebra of $Sp(2)$.
Given two real numbers $r_1,r_2>0$, we define a left invariant metric $g_r:=g_{(r_1,r_2)}$ on $Sp(2)$, such that on $$T_QSp(2)=\Big\{Q\Big(\begin{array}{cc}
          x& y\\
          -\overline{y}& z
        \end{array}\Big)~\Big|~\Big(\begin{array}{cc}
          x& y\\
          -\overline{y}& z
        \end{array}\Big)\in M(2,\mathbb{H}),\,\,\RE(x)=\RE(z)=0 \Big\},$$

\begin{equation}\label{left inv. metric on Sp2}
        \Big|Q\Big(\begin{array}{cc}
          x& y\\
          -\overline{y}& z
        \end{array}\Big)\Big|^2_{g_r}:=\frac{r_1}{2}|x|^2+|y|^2+\frac{r_2}{2}|z|^2.
\end{equation}
The left invariant metric $g_r$ is bi-invariant if and only if $r_1=r_2=1$.
As the first main result of this paper, we show

\begin{thm}\label{1.1}
For the left invariant metric $g_r$ on $Sp(2)$ defined in (\ref{left inv. metric on Sp2}).

(1).\label{sp2 nonnegative} The sectional curvature of the metric $g_r$ is nonnegative if and only if $r_1+r_2\leq 2$;

(2).\label{Einstein} The metric $g_r$ is Einstein if and only if $r_1=r_2=1$ or $\frac{1}{2}$.
\end{thm}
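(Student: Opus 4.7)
The plan is to work in the $\mathrm{Ad}(K)$-invariant decomposition $\mathfrak{sp}(2)=\mathfrak{h}_1\oplus\mathfrak{h}_2\oplus\mathfrak{m}$, where $K=Sp(1)\times Sp(1)$ is the diagonal subgroup, $\mathfrak{h}_1$ and $\mathfrak{h}_2$ are the two diagonal $\mathrm{Im}\,\mathbb{H}$ blocks, and $\mathfrak{m}$ is the off-diagonal $\mathbb{H}$. Denoting by $Q$ the bi-invariant metric ($g_r$ at $r_1=r_2=1$), one has $g_r=Q(\Phi\,\cdot,\cdot)$ with $\Phi$ the $Q$-self-adjoint, $\mathrm{Ad}(K)$-equivariant operator equal to $r_1\mathrm{Id}$ on $\mathfrak{h}_1$, $r_2\mathrm{Id}$ on $\mathfrak{h}_2$, and $\mathrm{Id}$ on $\mathfrak{m}$. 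I would fix the basis $X_1=\mathrm{diag}(\oi,0)$, $X_2=\mathrm{diag}(\oj,0)$, $X_3=\mathrm{diag}(\ok,0)$ for $\mathfrak{h}_1$, analogously $Z_1,Z_2,Z_3$ for $\mathfrak{h}_2$, and $Y_0,Y_1,Y_2,Y_3$ corresponding to the four unit quaternions $1,\oi,\oj,\ok$ in the off-diagonal $y$-slot; all Lie brackets can then be read off quaternion multiplication, and the right-$K$-invariance of $g_r$ collapses the analysis of 2-planes to a short list of model types.

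The Levi-Civita connection is obtained from
\[\nabla_UV=\tfrac{1}{2}[U,V]+\tfrac{1}{2}\Phi^{-1}\bigl([\Phi U,V]+[\Phi V,U]\bigr),\]
valid for left-invariant metrics of this form. Evaluating $R(U,V)V$ on representative 2-planes yields: spans inside a single $\mathfrak{h}_i$ give $K=2/r_i>0$; spans mixing $\mathfrak{h}_1$ and $\mathfrak{h}_2$ give $K=0$; mixed $\mathfrak{h}_i$-$\mathfrak{m}$ planes give $K=r_i/2>0$; and the $\mathfrak{m}$-$\mathfrak{m}$ planes give a $K$ that is an affine decreasing function of $r_1+r_2$. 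For the sharp part of (1) one must analyze a generic mixed plane of the form $\mathrm{span}(aX_1+bZ_1+cY_\alpha,\,dY_\beta+\cdots)$: writing $\langle R(U,V)V,U\rangle_{g_r}$ as a quadratic form in the coefficients extracts the threshold $r_1+r_2=2$. Necessity is then immediate by exhibiting a single plane whose curvature vanishes at $r_1+r_2=2$ and turns negative beyond; sufficiency reduces to a positive-semidefinite check for this quadratic form.

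For part (2), the Ricci tensor of $g_r$ is $\mathrm{Ad}(K)$-invariant and diagonal along $\mathfrak{h}_1\oplus\mathfrak{h}_2\oplus\mathfrak{m}$, hence has three scalar eigenvalues obtained by summing the sectional curvatures of part (1) against an orthonormal basis. A direct computation gives
\[\rho_{\mathfrak{h}_1}=\tfrac{4}{r_1}+2r_1,\qquad \rho_{\mathfrak{h}_2}=\tfrac{4}{r_2}+2r_2,\qquad \rho_{\mathfrak{m}}=12-3(r_1+r_2).\]
The equation $\rho_{\mathfrak{h}_1}=\rho_{\mathfrak{h}_2}$ factors as $(r_1-r_2)(r_1r_2-2)=0$. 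Substituting $r_1r_2=2$ into $\rho_{\mathfrak{h}_1}=\rho_{\mathfrak{m}}$ produces $5r_1^2-12r_1+10=0$, whose discriminant is negative, so this branch has no positive solutions. On the remaining branch $r_1=r_2=r$, the equation reduces to $(2r-1)(r-1)=0$, giving exactly $r\in\{1,\tfrac{1}{2}\}$.

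The main obstacle is the sufficient direction of (1): verifying that the sectional-curvature quadratic form is non-negative on the whole Grassmannian of 2-planes under $r_1+r_2\le 2$. Checking one-parameter subfamilies is not enough; one has to find a useful algebraic rearrangement (for instance, completing the square against the $\Phi$-weighted inner product) that displays the curvature as a manifest sum of squares precisely under the constraint, rather than a case-by-case examination.
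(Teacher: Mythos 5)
Your part (2) is sound and in fact cleaner than the paper's. The observation that the Ricci tensor is block-diagonal because $g_r$ and $\mathrm{Ric}$ are both $\mathrm{Ad}(K)$-invariant while $\mathfrak{h}_1$, $\mathfrak{h}_2$, $\mathfrak{m}$ are mutually inequivalent irreducible $K$-modules handles the sufficiency of $r_1=r_2=\frac12$ by Schur's lemma; the paper instead has to verify explicitly that the off-diagonal curvature sums $\sum_{k\neq i,j}K(\frac{e_i+e_j}{\sqrt2},e_k)$ match the Einstein constant at $r=\frac12$. Your algebra (the factorization $(r_1-r_2)(r_1r_2-2)=0$, the negative discriminant of $5r_1^2-12r_1+10$, and $(2r-1)(r-1)=0$) is correct, and your Ricci eigenvalues agree with the paper's (2.5).

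Part (1), however, has a genuine gap that you yourself flag at the end but do not fill. The whole theorem hinges on an algebraic rearrangement that you state you have not found. In the paper's notation $\alpha_1=y_2\overline{y}_1-y_1\overline{y}_2$, $\beta_1=x_1x_2-x_2x_1$, $\gamma_1=x_1y_2-x_2y_1$, $\alpha_2=\overline{y}_2y_1-\overline{y}_1y_2$, $\beta_2=z_1z_2-z_2z_1$, $\gamma_2=y_1z_2-y_2z_1$, the unpolarized curvature is shown to equal
\[
\tfrac{1}{4}|r_1\gamma_1+r_2\gamma_2|^2+\tfrac{r_1}{8}|\beta_1+(3-2r_1)\alpha_1|^2+\tfrac{r_2}{8}|\beta_2+(3-2r_2)\alpha_2|^2+\tfrac{1}{2}\bigl((1-r_1)^3+(1-r_2)^3\bigr)|\alpha_1|^2,
\]
using $|\alpha_1|=|\alpha_2|$. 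The threshold $r_1+r_2=2$ is visible precisely because $(1-r_1)^3+(1-r_2)^3\geq0$ iff $(1-r_1)+(1-r_2)\geq 0$, since $a^3+b^3=(a+b)(a^2-ab+b^2)$ and the second factor is nonnegative. Without this completed-square form your sufficiency claim has no content.

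Your sketch of the necessary direction is also too optimistic. Your ``representative 2-planes'' do not see the threshold: the $\mathfrak{m}$--$\mathfrak{m}$ basis planes have $K=4-\tfrac{3}{2}(r_1+r_2)$, which stays positive until $r_1+r_2=\tfrac{8}{3}$, and all other basis planes have curvatures $\tfrac{2}{r_i}$, $\tfrac{r_i}{2}$, or $0$ regardless of $r_1+r_2$. To exhibit negative curvature just past $r_1+r_2=2$ one must pick vectors slanted across all three summands so that the three squared terms in the decomposition above are forced to vanish while $\alpha_1\neq 0$; the paper uses
$\xi_1=\bigl(\begin{smallmatrix}tu\oi&-\oj\\-\oj&tv\oi\end{smallmatrix}\bigr)$,
$\xi_2=\bigl(\begin{smallmatrix}t(r_2-u)\oj&\oi\\\oi&t(r_1-v)\oj\end{smallmatrix}\bigr)$
with $t$ large and $u,v$ chosen to solve $t^2u(r_2-u)=2r_1-3$, $t^2v(r_1-v)=2r_2-3$, which makes $r_1\gamma_1+r_2\gamma_2=0$ and $\beta_p+(3-2r_p)\alpha_p=0$ simultaneously. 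You would need to produce something equivalent; the phrase ``immediate by exhibiting a single plane'' glosses over exactly the construction that does the work.
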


\begin{rem}
As a matter of fact, one can also discover the sufficient part of (1) from \cite{GZ00}, but here we succeed by a
direct calculation on the curvature.
\end{rem}



Taking the left invariant metric $g_r$ on $Sp(2)$ with $r_1+r_2\leq 2$, we define a product manifold $M^{14}=Sp(2)\times S^4$ with
the product metric, where $S^4\subset \mathbb{R}\oplus\mathbb{H}=\mathbb{R}^5$ is the unit sphere.
Consider the $S^{3}$-action on $M^{14}$ by
\begin{eqnarray}\label{S3 action on M14}
{\phi_1}: S^{3} \times M^{14} &\longrightarrow& M^{14} \nonumber\\
\Big(p, (Q, (t_1, t_2))\Big) &\longmapsto&\left(Q\Big(\begin{array}{cc}\overline{p}&\\ &1\end{array}\Big), (t_{1}, pt_2)\right),
\end{eqnarray}
where $p\in S^3$, $Q\in Sp(2)$, $(t_1, t_2)\in S^{4}\subset \mathbb{R}\oplus\mathbb{H}$. Since the action $\phi_1$ is an isometric, free action,
we acquire a quotient Riemannian manifold $\widetilde{N}^{11}:=M^{14}/\sim_{\phi_1}$, which is an $S^4$-bundle over $S^7$, and we have the following Riemannian submersion:
\begin{eqnarray}\label{pi1}
  \pi_1:\quad\, M^{14}\quad\, &\longrightarrow& \widetilde{N}^{11} \\
  (Q, (t_1, t_2)) &\longmapsto& [(Q, (t_1, t_2))] \nonumber
\end{eqnarray}
Moreover, the induced metric on $\widetilde{N}^{11}$ has nonnegative sectional curvature by the Gray-O'Neill formula.


Define an embedding by
\begin{eqnarray}\label{Sp2 into N11}
\Phi:  Sp(2)\times (0, \pi) &\longrightarrow& \quad\widetilde{N}^{11} \nonumber\\
~~~~  (Q, ~~\quad\theta) \quad &\longmapsto&  [(Q,(\cos\theta, \sin\theta))]
\end{eqnarray}
Noticing that there is naturally a cohomogeneity one action of $Sp(2)$ on $\widetilde{N}^{11}$ defined by
\begin{eqnarray}\label{Sp2 action on N11}
\Psi: \quad\, Sp(2)\,\,\times \,\,\widetilde{N}^{11}\,\quad & \longrightarrow& \widetilde{N}^{11}  \\
  (P, \,[(Q, (t_1, t_2))])& \longmapsto & [(PQ, (t_1, t_2))].\nonumber
\end{eqnarray}
The principal orbits are exactly $\Phi(Sp(2)\times \{\theta\})$ $(\theta\in (0,\pi))$. According to Proposition 2.8 in \cite{GT13}, they are isoparametric hypersurfaces in $\widetilde{N}^{11}$, and the singular orbits which are diffeomorphic to $S^7$ are the focal submanifolds. For convenience, we will denote $\Phi(Sp(2)\times \{\theta\})$ by $Sp(2)_{\theta}$ later on.

Moreover, there is an isometric involution on $\widetilde{N}^{11}$:
\begin{eqnarray*}
  \rho: \quad\widetilde{N}^{11}\quad & \longrightarrow & \widetilde{N}^{11}  \\
    \, [(Q, (\cos\theta, \sin\theta))]& \longmapsto & [\left(Q, (\cos (\pi-\theta), \sin (\pi-\theta))\right)].
\end{eqnarray*}
Thus the hypersurfaces $Sp(2)_{\theta}$ and $Sp(2)_{\pi-\theta}$ $(\theta\in (0,\pi))$ correspond to each other by $\rho$.

Furthermore, by analyzing the second fundamental form of the hypersurface $Sp(2)_{\theta}$ in $\widetilde{N}^{11}$, we obtain the second main result of the present paper:
\begin{thm}\label{Sp2 isoparametric in N11}
(1). The principal orbit $Sp(2)_{\theta}$ with $\theta\in(0,\pi)$ of the action (\ref{Sp2 action on N11}) is an isoparametric hypersurface in $\widetilde{N}^{11}$ with principal curvatures
$\frac{\cot\theta}{1+\frac{2}{r_1}\sin^2\theta}$
of multiplicity $3$ and $0$ of multiplicity $7$. In particular, the fixed point set of the involution $\rho$, i.e., the hypersurface $Sp(2)_{\frac{\pi}{2}}$ is totally geodesic in $\widetilde{N}^{11}$.

(2). The singular orbits of the action (\ref{Sp2 action on N11}), which are focal submanifolds of this isoparametric foliation, are diffeomorphic to $S^7$ and totally geodesic in $\widetilde{N}^{11}$. 
\end{thm}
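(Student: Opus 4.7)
The plan is to carry out all computations upstairs in the product manifold $M^{14}=Sp(2)\times S^{4}$, where the metric is a product and the geometry is transparent, and then push them down through the Riemannian submersion $\pi_{1}$. The key observation is that $\pi_{1}^{-1}(Sp(2)_{\theta})=Sp(2)\times S^{3}_{\theta}$, where $S^{3}_{\theta}=\{(\cos\theta,\sin\theta\cdot p):p\in S^{3}\}\subset S^{4}$ is the latitude three-sphere of radius $\sin\theta$. Computing the infinitesimal generator of $\phi_{1}$ at $(Q,(\cos\theta,\sin\theta))$ — it is $\bigl(Q\bigl(\begin{smallmatrix}-u&0\\0&0\end{smallmatrix}\bigr),(0,u\sin\theta)\bigr)$ for $u\in\IM\mathbb{H}$ — one sees that the vertical distribution of $\phi_{1}$ is tangent to $Sp(2)\times S^{3}_{\theta}$, so $\pi_{1}$ restricts to a Riemannian submersion $Sp(2)\times S^{3}_{\theta}\to Sp(2)_{\theta}$ with the same fibres. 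A short check shows that $(0,(-\sin\theta,\cos\theta))$ is a horizontal unit vector, so it is simultaneously a unit normal of $Sp(2)\times S^{3}_{\theta}$ in $M^{14}$ and the horizontal lift of a unit normal $N$ of $Sp(2)_{\theta}$ in $\widetilde{N}^{11}$.

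For part (1), I would first compute the shape operator of $Sp(2)\times S^{3}_{\theta}$ in $M^{14}$: since the normal lies only in the $S^{4}$-factor and is parallel along the $Sp(2)$-direction, it vanishes on $TSp(2)$ and equals $-\cot\theta\cdot\Id$ on $TS^{3}_{\theta}$ (the standard shape operator of the round latitude three-sphere in $S^{4}$). To transfer this to $\widetilde{N}^{11}$ I would invoke the elementary identity $g_{\widetilde{N}^{11}}(A^{Sp(2)_{\theta}}_{N}X,Y)=g_{M^{14}}(A^{Sp(2)\times S^{3}_{\theta}}_{\tilde n}\tilde X,\tilde Y)$ for horizontal lifts $\tilde X,\tilde Y$, which holds because $\tilde n$ itself is horizontal. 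Writing the horizontality condition explicitly, a tangent vector of $Sp(2)_{\theta}$ associated with $(x,y,z)\in\mathfrak{sp}(2)$ (with $\RE x=\RE z=0$) lifts to $\bigl(Q\bigl(\begin{smallmatrix}x&y\\-\overline{y}&z\end{smallmatrix}\bigr),(0,\tfrac{r_{1}}{2\sin\theta}x)\bigr)$, with squared norm $\tfrac{r_{1}(r_{1}+2\sin^{2}\theta)}{4\sin^{2}\theta}|x|^{2}+|y|^{2}+\tfrac{r_{2}}{2}|z|^{2}$. Feeding this into the upstairs shape operator and comparing with this metric produces a single nonzero principal curvature $\frac{\cot\theta}{1+\frac{2}{r_{1}}\sin^{2}\theta}$ (up to the sign set by the orientation of $N$) on the three-dimensional $x$-subspace, while the seven-dimensional $(y,z)$-subspace lies in its kernel. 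The totally-geodesic claim for $Sp(2)_{\pi/2}$ is then immediate from $\cot(\pi/2)=0$, and also follows from the fact that $Sp(2)_{\pi/2}$ is the fixed-point set of the isometric involution $\rho$.

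For part (2), the singular orbit at $\theta=0$ consists of the classes $[(Q,(1,0))]$; from the definition of $\Psi$, the isotropy at $[(I,(1,0))]$ equals $\{\operatorname{diag}(\overline{p},1):p\in Sp(1)\}\subset Sp(2)$, so the orbit is $Sp(2)/Sp(1)\cong S^{7}$ via the map sending $Q$ to its second column; the case $\theta=\pi$ is analogous. To show that each singular orbit is totally geodesic I would introduce
\begin{equation*}
\sigma:\widetilde{N}^{11}\longrightarrow\widetilde{N}^{11},\qquad[(Q,(t_{1},t_{2}))]\longmapsto[(Q,(t_{1},-t_{2}))].
\end{equation*}
Since $t_{2}\mapsto -t_{2}$ is an isometry of $\mathbb{H}$ commuting with the $S^{3}$-action $t_{2}\mapsto pt_{2}$ in $\phi_{1}$, this $\sigma$ is a well-defined isometric involution of $\widetilde{N}^{11}$. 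A short calculation shows that its fixed-point set equals $Sp(2)_{0}\sqcup Sp(2)_{\pi}$ exactly, so each singular orbit is a connected component of the fixed-point set of an isometric involution, and is therefore totally geodesic.

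The main technical obstacle I anticipate is the push-forward calculation of the shape operator through $\pi_{1}$: the horizontal lift of a tangent vector to $Sp(2)_{\theta}$ unavoidably mixes the $\mathfrak{sp}(2)$-component with the $S^{4}$-component, and one must keep careful track of both the $g_{r}$-norms upstairs and the action of the upstairs shape operator to recover the clean denominator $1+\tfrac{2}{r_{1}}\sin^{2}\theta$. Once that is done, the identification of the singular orbits as $S^{7}$ is a short isotropy computation, and the totally-geodesic statements reduce to applying the fixed-point principle to the involutions $\rho$ and $\sigma$.
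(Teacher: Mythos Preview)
Your argument is correct and follows a genuinely different route from the paper's. For part~(1), the paper first derives the quotient metric on $\widetilde{N}^{11}$ in the $(Q,\theta)$-coordinates supplied by $\Phi$, then computes the Levi--Civita connection of that metric by a Koszul-type calculation (obtaining $\nabla^{\widetilde{N}}_{X_1}X_2=\tfrac12[X_1,X_2]+E(X_1,X_2)$ with an explicit tensor $E$), and finally reads off $B(X_1,X_2)=\tfrac{\lambda'}{2}\langle x_1,x_2\rangle$ directly. You instead compute the second fundamental form of the saturated preimage $Sp(2)\times S^{3}_{\theta}$ in the product $M^{14}$---where it is immediate because only the latitude-sphere factor contributes---and push it down through the submersion identity $B^{Sp(2)_\theta}(X,Y)=B^{Sp(2)\times S^3_\theta}(\tilde X,\tilde Y)$ for horizontal lifts with horizontal normal. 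Your route is more geometric and bypasses the connection on $\widetilde{N}^{11}$; the paper's route costs more but produces the explicit connection formula, which it reuses to compute $\Delta^{\widetilde{N}}\widetilde F$ in Proposition~\ref{isoparametric function F1}. For part~(2), both arguments identify the singular orbit with $Sp(2)/Sp(1)\cong S^{7}$; for total geodesicity the paper observes that $\pi_1^{-1}(S^7)=Sp(2)\times\{(\pm1,0)\}$ is totally geodesic in the product and invokes Proposition~7.1 of \cite{TT95}, whereas your involution $\sigma$ gives a self-contained argument (the $Sp(2)$-factor forces $p=1$ in the fixed-point equation, so the fixed set is exactly the two singular orbits). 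It is worth noting that the submersion identity you use in part~(1) is essentially the content of the \cite{TT95} proposition the paper cites in part~(2), so the two proofs are closer in spirit than they first appear.
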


As a matter of fact, we can define the explicit isoparametric function on $\widetilde{N}^{11}$ corresponding to the isoparametric foliation in the theorem above by
projecting the following function $f$ through the projection $\pi_1$ in (\ref{pi1}):
\begin{eqnarray}\label{f on M14}
  f: \quad\,M^{14}\quad\, &\longrightarrow& \mathbb{R} \\
  A=(Q, (t_1, t_2)) &\longmapsto& t_1.\nonumber
\end{eqnarray}
It is easy to see that $\nabla^M f~|_{A}=(0, |t_2|^2, -t_1t_2)$, and thus $|\nabla^M f|_M^2=|t_2|^2=1-f^2$. Namely, $f$ is a transnormal function on $M^{14}$.

Defining $\widetilde{F}: \widetilde{N}^{11}\rightarrow \mathbb{R}$ by $\widetilde{F}\circ \pi_1=f$. Observing that the vertical distribution of the Riemannian submersion $\pi_1$ in (\ref{pi1}) is
\begin{equation*}
  \mathcal{V}_{A}=\Big\{ \left(Q\Big(\begin{array}{cc}
                                  u &  \\
                                   & 0
                                \end{array}
  \Big), (0, -ut_2)\right)~\big|~u\in \mathbb{H}, \mathrm{Re}(u)=0\Big\},
\end{equation*}
we find that $\nabla^M f$ is orthogonal to $\mathcal{V}_{A}$, thus a horizontal vector field. $\nabla^{\widetilde{N}}\widetilde{F}$ is just the projection of $\nabla^M f$ on the base manifold $\widetilde{N}^{11}$ and further
$$|\nabla^{\widetilde{N}}\widetilde{F}|_{\widetilde{N}}^2=|\nabla^M f|_M^2=1-\widetilde{F}^2.$$
That is, $\widetilde{F}$ is transnormal on $\widetilde{N}^{11}$. Moreover,
\begin{prop}\label{isoparametric function F1}
The function $\widetilde{F}: \widetilde{N}^{11}\rightarrow \mathbb{R}$ defined by $\widetilde{F}\circ \pi_1=f$ is an isoparametric function on $\widetilde{N}^{11}$ which satisfies:
\begin{equation}\label{isop func F1}
  \Big\{\begin{array}{ll}
|\nabla^{\widetilde{N}} \widetilde{F}|_{\widetilde{N}}^2=1-\widetilde{F}^2 \\
\quad \Delta^{\widetilde{N}} \widetilde{F}\,=-\left(1+\frac{3}{1+\frac{2}{r_1}(1-\widetilde{F}^2)}\right)\widetilde{F}.
\end{array}
\end{equation}
The corresponding isoparametric foliation is exactly the one in Theorem \ref{Sp2 isoparametric in N11}.
\end{prop}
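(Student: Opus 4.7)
The transnormality equation $|\nabla^{\widetilde{N}}\widetilde{F}|_{\widetilde{N}}^2 = 1-\widetilde{F}^2$ is already established in the paragraph preceding the proposition: since $\pi_1$ is a Riemannian submersion and $\nabla^M f = (0,|t_2|^2,-t_1t_2)$ is horizontal, its projection under $\pi_1$ is $\nabla^{\widetilde{N}}\widetilde{F}$ and the length squared is preserved. So the real work is the Laplacian identity, which I would deduce from Theorem~\ref{Sp2 isoparametric in N11} via the general mean-curvature formula for a transnormal function.

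\textbf{General identity.} For any transnormal function with $|\nabla\widetilde{F}|^2 = b(\widetilde{F})$ on a Riemannian manifold, I would first record
\[
\Delta\widetilde{F} \;=\; \frac{b'(\widetilde{F})}{2} \;+\; |\nabla\widetilde{F}|\cdot H,
\]
where $n := \nabla\widetilde{F}/|\nabla\widetilde{F}|$ is the unit normal to a regular level set and $H = \sum_i \langle \nabla_{e_i} n, e_i\rangle$ is the trace of the shape operator with respect to $n$, computed in an orthonormal frame $\{e_i\}$ tangent to the level set. This follows by decomposing $\Delta\widetilde{F} = \mathrm{tr}(\mathrm{Hess}\,\widetilde{F})$ into normal and tangential parts: differentiating $|\nabla\widetilde{F}|^2 = b(\widetilde{F})$ along $\nabla\widetilde{F}$ gives $\mathrm{Hess}\,\widetilde{F}(n,n) = b'(\widetilde{F})/2$, while the identity $\mathrm{Hess}\,\widetilde{F}(e_i,e_i) = |\nabla\widetilde{F}|\langle\nabla_{e_i}n,e_i\rangle$ holds because $e_i(\widetilde{F}) = 0$ forces the $e_i(|\nabla\widetilde{F}|)\langle n,e_i\rangle$ term to vanish.

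\textbf{Substitution.} With $b(\widetilde{F})=1-\widetilde{F}^2$, $b'(\widetilde{F})=-2\widetilde{F}$ and $|\nabla\widetilde{F}|=\sin\theta$ (where $\widetilde{F}=\cos\theta$), I plug in the principal curvature data of Theorem~\ref{Sp2 isoparametric in N11}: on $Sp(2)_\theta = \{\widetilde{F}=\cos\theta\}$ the principal curvatures are $\cot\theta/(1+\tfrac{2}{r_1}\sin^2\theta)$ of multiplicity $3$ and $0$ of multiplicity $7$, all constant along the level set, so their sum is a function of $\widetilde{F}$ alone (which already forces $\widetilde{F}$ to be isoparametric). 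Orienting the normal so that it points in the direction of increasing $\widetilde{F}=\cos\theta$, i.e.\ \emph{toward} the focal submanifold at $\theta=0$ (opposite to the geodesically outward normal used in Theorem~\ref{Sp2 isoparametric in N11}), the trace $H$ picks up a sign and becomes $-3\cot\theta/(1+\tfrac{2}{r_1}\sin^2\theta)$. Substitution yields
\[
\Delta^{\widetilde{N}}\widetilde{F} \;=\; -\widetilde{F} - \frac{3\widetilde{F}}{1+\tfrac{2}{r_1}(1-\widetilde{F}^2)} \;=\; -\left(1+\frac{3}{1+\tfrac{2}{r_1}(1-\widetilde{F}^2)}\right)\widetilde{F}.
\]

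\textbf{Foliation matching and main difficulty.} Since $\widetilde{F}(\Phi(Q,\theta))=\cos\theta$ depends only on $\theta$, every regular level $\{\widetilde{F}=c\}$ coincides with a principal orbit $Sp(2)_{\arccos c}$, and the two singular values $\widetilde{F}=\pm 1$ correspond to the two singular orbits; the isoparametric foliation induced by $\widetilde{F}$ is therefore the one produced by $\Psi$. The only subtlety is the orientation bookkeeping for the shape operator that produces the correct sign in the Laplacian formula; an alternative (but more laborious) route avoiding this would be to compute $\Delta^M f = -4f$ directly from $f(t_1,t_2)=t_1$ on $S^4$ and then account for the mean curvature of the $S^3$-fibers of $\pi_1$, but the level-set approach above is cleaner.
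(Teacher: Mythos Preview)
Your argument is correct and takes a genuinely different route from the paper. The paper does \emph{not} invoke Theorem~\ref{Sp2 isoparametric in N11}: instead it fixes the explicit orthonormal frame $e_1,\dots,e_{11}$ on $\widetilde{N}^{11}$, uses the connection formula \eqref{connection on N11} to compute each $\nabla_{e_i}\nabla^{\widetilde{N}}\widetilde{F}$ directly (finding that only $i=1,2,3,11$ contribute), and sums to obtain $\Delta^{\widetilde{N}}\widetilde{F}=-\tfrac{3\lambda'}{2\lambda}\sin\theta-\cos\theta$. Your approach is more economical because it recognizes that this direct computation is redundant with the shape-operator calculation already done in Theorem~\ref{Sp2 isoparametric in N11}, and packages the link via the standard identity $\Delta\widetilde{F}=\tfrac12 b'(\widetilde{F})+|\nabla\widetilde{F}|\,H$. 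The paper's route, on the other hand, is logically independent of Theorem~\ref{Sp2 isoparametric in N11} and gives a self-contained check of the mean curvature value.

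One small correction to your bookkeeping: the unit normal $n=\nabla^{\widetilde{N}}\widetilde{F}/|\nabla^{\widetilde{N}}\widetilde{F}|=(0,-1)$ is \emph{the same} as the normal $N=(0,-1)$ used in the proof of Theorem~\ref{Sp2 isoparametric in N11}, not opposite to it. The minus sign you need in $H=-3\cot\theta/(1+\tfrac{2}{r_1}\sin^2\theta)$ comes not from a reversed orientation but from the convention mismatch: your $H=\sum_i\langle\nabla_{e_i}n,e_i\rangle$ equals $-\sum_i B(e_i,e_i)$ in the paper's convention $B(X,Y)=\langle\nabla_X Y,N\rangle$, so the principal curvatures listed in Theorem~\ref{Sp2 isoparametric in N11} enter your formula with a sign flip. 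The final Laplacian formula is unaffected.
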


Now we consider the following $S^3$-action on $\widetilde{N}^{11}$:
\begin{eqnarray}\label{S3 action on N11}
  \phi_2: \,S^3 \times \widetilde{N}^{11}\,\, &\longrightarrow& \quad \widetilde{N}^{11} \\
  \Big(q, [(Q, (t_1, t_2))]\Big) &\longmapsto& [(\Big(\begin{array}{cc} q &\\ & q\end{array}\Big) Q, (t_1, t_2\bar{q}))] .\nonumber
\end{eqnarray}
Since the action $\phi_2$ is free and isometric, we obtain a quotient Riemannian manifold $N^8:=\widetilde{N}^{11}/\sim_{\phi_2}$ and a Riemannian submersion $\pi_2:~\widetilde{N}^{11}\rightarrow N^8$. It also follows from the Gray-O'Neill formula that the induced metric on $N^{8}$ has nonnegative sectional curvature. Restricting $\phi_2$ on the hypersurface $Sp(2)_{\theta}$ and denoting its quotient by $\Sigma^7_{\theta}$, one can find that it is actually the $S^3$ action on $Sp(2)$ as in (\ref{S3 action on sp2}). Thus $\Sigma^7_{\theta}$ is diffeomorphic
to the Gromoll-Meyer sphere $\Sigma^7$. To describe it more clearly, we draw a diagram as below:
\begin{eqnarray}\label{pi2}
  Sp(2)_{\theta} &\hookrightarrow& \widetilde{N}^{11} \\
 \pi_2|_{Sp(2)_{\theta}} \Big\downarrow && \Big\downarrow \pi_2 \nonumber\\
  \Sigma^7_{\theta} &\hookrightarrow& N^8 \nonumber
\end{eqnarray}

Define $F: N^8\rightarrow \mathbb{R}$ by $F\circ \pi_2=\widetilde{F}$. Noticing that $\nabla^{\widetilde{N}} \widetilde{F}$
is also a horizontal direction of the Riemannian submersion $\pi_2$, we establish

\begin{thm}\label{F transnormal}
(1). The function $F$ defined by $F\circ \pi_2=\widetilde{F}$ is a transnormal function on $N^8$ satisfying $|\nabla^NF|_N^2=1-F^2$. The transnormal hypersurfaces $\Sigma^7_{\theta}$ ($\theta\in (0, \pi)$) are diffeomorphic to the Gromoll-Meyer sphere $\Sigma^7$. Moreover, $\Sigma^7_{\frac{\pi}{2}}$ is totally geodesic in $N^8$. The focal submanifolds $F^{-1}(\pm 1)$ are diffeomorphic to $S^4$, and totally geodesic in $N^8$. 

(2).For each $\theta\in (0,\pi)$, the induced metric of $\Sigma^7_{\theta}$ in $N^8$ has positive Ricci curvature and quasi-positive sectional curvature simultaneously.
\end{thm}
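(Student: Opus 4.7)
I would treat the two parts separately, leaning on the Riemannian submersion structure throughout.

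\textbf{Part (1).} Transnormality is essentially free from the discussion preceding the statement: since $\phi_2$ leaves $t_1$ unchanged, $\widetilde F$ descends to a well-defined $F$, and $\nabla^{\widetilde N}\widetilde F$ is horizontal for $\pi_2$, so $|\nabla^N F|^2=|\nabla^{\widetilde N}\widetilde F|^2=1-\widetilde F^2=1-F^2$. Diagram (\ref{pi2}) identifies $\pi_2|_{Sp(2)_\theta}$, via the diffeomorphism $\Phi$ of (\ref{Sp2 into N11}), with the Gromoll--Meyer quotient (\ref{S3 action on sp2}), whence $\Sigma^7_\theta\cong\Sigma^7$. For the totally geodesic claim on $\Sigma^7_{\pi/2}$, I would verify directly that $\rho$ commutes with $\phi_2$, so $\rho$ descends to an isometric involution $\bar\rho$ on $N^8$; linearizing at a fixed point shows $\Sigma^7_{\pi/2}$ is a connected component of $\mathrm{Fix}(\bar\rho)$ and hence totally geodesic. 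To identify $F^{-1}(\pm 1)$, I note that the focal $S^7$ of Theorem \ref{Sp2 isoparametric in N11}(2) coincides, via the second-column projection, with the unit sphere in $\mathbb{H}^2$, on which $\phi_2$ becomes the diagonal quaternionic Hopf action with quotient $\mathbb{HP}^1=S^4$. Totally geodesicity of $S^4\subset N^8$ then descends along the Riemannian submersion $\pi_2|_{S^7}\colon S^7\to S^4$: horizontal lifts of $S^4$-geodesics are geodesics in $S^7$, hence in $\widetilde N^{11}$ by Theorem \ref{Sp2 isoparametric in N11}(2), and their $\pi_2$-images are therefore geodesics in $N^8$.

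\textbf{Part (2).} The plan is to realize the induced metric on $\Sigma^7_\theta$ as the target of a tower of two Riemannian submersions starting from a nonnegatively curved product. Setting $S^3_\theta=\{(\cos\theta,t_2):|t_2|=\sin\theta\}$, the restriction $\pi_1|_{Sp(2)\times S^3_\theta}\colon Sp(2)\times S^3_\theta\to Sp(2)_\theta$ is a Riemannian submersion: the $\phi_1$-orbits through $Sp(2)\times S^3_\theta$ stay inside it, so horizontal lifts of $Sp(2)_\theta$-tangent vectors land in $T(Sp(2)\times S^3_\theta)$. Composing with $\pi_2|_{Sp(2)_\theta}\colon Sp(2)_\theta\to\Sigma^7_\theta$ presents the metric on $\Sigma^7_\theta$ as a double submersion from $(Sp(2),g_r)\times S^3(\sin\theta)$. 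Under the standing assumption $r_1+r_2\leq 2$, Theorem \ref{1.1} makes $g_r$ nonnegatively curved, so the top space is nonnegatively curved, and iterating the Gray--O'Neill formula
\[
K_{\Sigma^7_\theta}(\pi_{*}X,\pi_{*}Y)=K_{\mathrm{top}}(X,Y)+3|A_XY|^2
\]
immediately yields nonnegative sectional curvature on $\Sigma^7_\theta$.

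To upgrade to strict positivity, for quasi-positive sectional curvature I would pick a convenient base point, say $[I,(\cos\theta,\sin\theta\mathbf{i})]$, and exhibit an explicit pair of horizontal vectors at which the A-tensor of the second (Gromoll--Meyer-type) submersion is non-zero, producing a $2$-plane of strictly positive curvature there. For positive Ricci curvature, I would fix an arbitrary unit $v\in T\Sigma^7_\theta$, take an orthonormal basis of $v^\perp$ adapted to the $g_r$-splitting in (\ref{left inv. metric on Sp2}), and argue that the A-tensor contributions to $\sum_i K(v,e_i)$ cannot all vanish, producing a strictly positive sum. The main obstacle will be the bookkeeping: the A-tensor of the composite intertwines the $(r_1,r_2)$-scaling on $TSp(2)$ with the $\sin\theta$-scaling imported from $\pi_1$, so obtaining positivity uniformly in $\theta\in(0,\pi)$ and in $v$ forces one to track every entry of $A$ in an $r$-adapted frame and rule out degenerate directions. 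The underlying algebraic non-degeneracy should ultimately be the same one that makes the classical Gromoll--Meyer metric almost-positively curved.
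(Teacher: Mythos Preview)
Your Part~(1) is fine and close to the paper's argument; the variations (using the descended involution $\bar\rho$ for $\Sigma^7_{\pi/2}$, and the Hopf picture for $F^{-1}(\pm1)$) are legitimate alternatives to the paper's appeals to projection of totally geodesic submanifolds along Riemannian submersions.

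For Part~(2), however, you miss the structural observation that drives the paper's proof, and your plan has a real gap. The paper's point is that the induced metric on $Sp(2)_\theta\subset\widetilde N^{11}$ is \emph{again} a member of the two-parameter family: it equals $g_{(\bar r_1,r_2)}$ with $\bar r_1=\dfrac{2\sin^2\theta}{1+\frac{2}{r_1}\sin^2\theta}<r_1$, so $\bar r_1+r_2<2$ strictly. The explicit curvature formula~(\ref{sectional cuervature of xi1 xi2}) therefore applies verbatim on $Sp(2)_\theta$, and since $(1-\bar r_1)^3+(1-r_2)^3>0$ one reads off that $\widetilde K(\widetilde X_1,\widetilde X_2)=0$ forces $\alpha_1=\alpha_2=\beta_1=\beta_2=0$ and $\bar r_1\gamma_1+r_2\gamma_2=0$, hence $x_1\parallel x_2$, $y_1\parallel y_2$, $z_1\parallel z_2$. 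Positive Ricci then follows by a seven-case dimension count showing that no horizontal unit vector $\widetilde X_0$ can satisfy $\widetilde K(\widetilde X_0,\widetilde X_i)=0$ for six independent horizontal $\widetilde X_i$; quasi-positivity follows because at $Q=\mathrm{Id}$ the horizontal space of $\pi_2$ has $z$-component zero, so $\gamma_2\equiv 0$ and the vanishing conditions collapse to $\xi_1\parallel\xi_2$. In both arguments the strict positivity comes entirely from $\widetilde K$ on $Sp(2)_\theta$; the O'Neill correction $\tfrac34|A|^2$ is never invoked.

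Your plan instead tries to extract positivity from the $A$-tensor, and this is both harder and, for quasi-positivity, insufficient as stated: exhibiting \emph{one} horizontal pair with $A\neq 0$ yields a single positively curved $2$-plane, whereas quasi-positive curvature requires a point at which \emph{every} $2$-plane is positively curved. For Ricci, ``the $A$-tensor contributions cannot all vanish'' is the wrong emphasis: the top space $(Sp(2),g_r)\times S^3(\sin\theta)$ has plenty of flat planes (product planes, and the zero-curvature planes of $g_r$ itself), so even granting your claim about $A$ you would still need to control the horizontal curvature of the intermediate space $Sp(2)_\theta$---which is exactly what the paper does directly via the identification with $g_{(\bar r_1,r_2)}$ and formula~(\ref{sectional cuervature of xi1 xi2}).
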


\begin{rem}
The function $F$ is not isoparametric. More precisely, for $\theta\in (0,\pi)$ and $\theta\neq\frac{\pi}{2}$, the mean curvature function of the transnormal hypersurface $\Sigma^7_{\theta}$ is not constant with respect to the unit normal vector field $\frac{\nabla^NF}{|\nabla^NF|_N}$. For instance, the mean curvatures at $\pi_2([(\Big(\begin{array}{cc}
          1& 0\\
          0& 1
        \end{array}\Big), (\cos\theta, \sin\theta))])$ and $\pi_2([(\Big(\begin{array}{cc}
          i& 0\\
          0& 1
        \end{array}\Big), (\cos\theta, \sin\theta))])$
of $\Sigma^7_{\theta}$ are $3\mu$ and $3\mu-\frac{16\lambda\mu}{8\lambda+r_2}$ respectively, where
$\lambda=\frac{\sin^2\theta}{1+\frac{2}{r_1}\sin^2\theta}$ and $\mu=\frac{\cot\theta}{1+\frac{2}{r_1}\sin^2\theta}$.
\end{rem}
Noticing that $F$ is a Morse-Bott function on $N^8$ with critical set $S^4\sqcup S^4$, we remark that according to the fundamental construction in Theorem 1.1 of \cite{QT15}, there exists a metric on $N^8$ such that $F$ is an isoparametric function and the the focal submanifolds are still $S^4$ and totally geodesic. However, one cannot know more about the intrinsic geometric properties of $N^8$ and the isoparametric hypersurfaces.


This paper is organized as follows: in Section 2, we will discuss about the curvature property of the left-invariant metric $g_r$ on $Sp(2)$. Section 3 will be focused on the geometry of the isoparametric foliation in $\widetilde{N}^{11}$, and Section 4 will be concentrated on the geometry of the transnormal system in $N^8$.

\section{Left invariant metrics on $Sp(2)$} \label{Left invariant metrics on $Sp(2)$}

Firstly, we will study the connection of the metric $g_r$ defined in (\ref{left inv. metric on Sp2}).

Let $\xi_1,\xi_2$ be two left invariant vector fields on $Sp(2)$ such that at any point $Q\in Sp(2)$,
$\xi_i|_Q=Q\Big(\begin{array}{cc}
          x_i& y_i\\
          -\overline{y}_i& z_i
        \end{array}\Big)$, $i=1,2$.
Denote by $\nabla$ the Levi-Civita connection associated with $g_r$.
Concerning the connection $\nabla_{\xi_1}\xi_2$ is also a left invariant vector field,
we establish the following formula which generalizes Lemma 4.5 in \cite{GT13}:
\begin{lem}\label{connection in Sp2}
$$\nabla_{\xi_1}\xi_2=\frac{1}{2}[\xi_1,\xi_2]+D(\xi_1,\xi_2),$$
where $D(\xi_1,\xi_2)$ is a left invariant vector field with
$$D(\xi_1,\xi_2)|_Q=Q\left(\begin{array}{cc}
          0& D_{12}\\
          -\overline{D}_{12}& 0
        \end{array}\right)\Big|_Q,$$
        and $D_{12}(Q)=\frac{1}{2}(1-r_1)(x_1y_2+x_2y_1)+\frac{1}{2}(r_2-1)(y_1z_2+y_2z_1)$.\hfill $\Box$
\end{lem}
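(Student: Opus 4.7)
The plan is to apply the Koszul identity for left-invariant vector fields and then isolate the symmetric correction by comparing $g_r$ with the bi-invariant metric at $r_1=r_2=1$. Since $g_r$ is constant on left-invariant pairs, the Koszul formula collapses to
\[
2g_r(\nabla_{\xi_1}\xi_2,\xi_3)
=g_r([\xi_1,\xi_2],\xi_3)+g_r([\xi_3,\xi_1],\xi_2)+g_r(\xi_1,[\xi_3,\xi_2]).
\]
Writing $\nabla_{\xi_1}\xi_2=\tfrac{1}{2}[\xi_1,\xi_2]+U(\xi_1,\xi_2)$ with $U$ symmetric in its two arguments reduces the problem to identifying $U$ with the prescribed $D$.

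To exploit bi-invariance, I would denote by $g_{bi}$ the metric corresponding to $r_1=r_2=1$ and introduce the $g_{bi}$-self-adjoint operator $A:\mathfrak{sp}(2)\to\mathfrak{sp}(2)$ characterized by $g_r(X,Y)=g_{bi}(AX,Y)$; explicitly, $A$ scales the diagonal $x$-block by $r_1$, the diagonal $z$-block by $r_2$, and acts as the identity on the off-diagonal $y$-block. Substituting $g_r=g_{bi}(A\cdot,\cdot)$ into the displayed identity and invoking the total antisymmetry of $(X,Y,Z)\mapsto g_{bi}(X,[Y,Z])$ (the $\mathrm{ad}$-invariance of $g_{bi}$) yields
\[
2A\,U(\xi_1,\xi_2)=[\xi_1,A\xi_2]+[\xi_2,A\xi_1]
=[\xi_1,(A-I)\xi_2]+[\xi_2,(A-I)\xi_1],
\]
where the final equality uses $[\xi_1,\xi_2]+[\xi_2,\xi_1]=0$.

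The remaining step is a direct block-matrix computation at $Q$. Since $(A-I)\xi_i$ is represented by the diagonal matrix with entries $(r_1-1)x_i$ and $(r_2-1)z_i$, expanding the two brackets and summing makes the two diagonal blocks cancel by antisymmetry in $(\xi_1,\xi_2)$; this is the structural reason that $D$ has no diagonal part, consistent with the sanity check that $D=0$ when $r_1=r_2=1$. The surviving $(1,2)$-block simplifies to $-(r_1-1)(x_1y_2+x_2y_1)+(r_2-1)(y_1z_2+y_2z_1)$, and since $A$ is the identity on the $y$-block, dividing by $2$ recovers exactly the stated formula for $D_{12}$; the $(2,1)$-block is then automatically $-\overline{D_{12}}$, using that $x_i,z_i$ are purely imaginary.

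The only real obstacle is the bookkeeping of the non-commutative quaternion products across the four block multiplications. Once the comparison with the bi-invariant metric is in place, no subtle manipulation is required beyond the standard Koszul / $\mathrm{ad}$-invariance mechanism.
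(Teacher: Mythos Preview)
Your argument is correct and cleaner than the paper's. Both proofs begin the same way—collapsing the Koszul formula to its three bracket terms because $g_r$ is constant on left-invariant pairs—but diverge from there. The paper proceeds by brute force: it writes out the components $x_{ij},y_{ij},z_{ij}$ of $[\xi_i,\xi_j]$ explicitly and then expands $-\langle\xi_1,[\xi_2,\xi_3]\rangle+\langle\xi_2,[\xi_3,\xi_1]\rangle$ in the $g_r$-metric until the $x_3$- and $z_3$-contributions cancel and only a pairing with $y_3$ survives. Your route is more structural: introducing the $g_{bi}$-self-adjoint operator $A$ with $g_r=g_{bi}(A\,\cdot,\cdot)$ and invoking $\mathrm{ad}$-invariance of $g_{bi}$ yields the closed formula $2A\,U(\xi_1,\xi_2)=[\xi_1,(A-I)\xi_2]+[\xi_2,(A-I)\xi_1]$ without ever touching a third test vector $\xi_3$. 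This makes the vanishing of the diagonal blocks automatic (symmetrization of an antisymmetric expression) rather than a coincidence discovered after expansion, and it explains a priori why $D$ lies in the off-diagonal block on which $A=\mathrm{Id}$. The paper's computation is shorter to set up but longer to execute; yours requires the extra idea of comparing to the bi-invariant metric but then the block calculation is minimal and the method visibly generalizes to any left-invariant metric obtained from a bi-invariant one by a self-adjoint scaling.
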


\begin{proof}
Let $\xi_3$ be a left invariant vector field on $Sp(2)$ such that at $Q\in Sp(2)$, $\xi_3|_Q=Q\Big(\begin{array}{cc}
          x_3& y_3\\
          -\overline{y}_3& z_3
        \end{array}\Big)$.
From the left invariance of the vector fields, it follows that the first three items on the right hand
side of Koszul formula below are vanishing:
\begin{eqnarray}\label{Koszul}
&&2\langle \nabla_{\xi_1}\xi_2, \xi_3\rangle\\
&=&\xi_1\langle\xi_2, \xi_3\rangle+\xi_2\langle\xi_3, \xi_1\rangle-\xi_3\langle\xi_1, \xi_2\rangle-
\langle\xi_1, [\xi_2, \xi_3]\rangle+\langle\xi_2, [\xi_3, \xi_1]\rangle+\langle\xi_3, [\xi_1, \xi_2]\rangle.\nonumber
\end{eqnarray}

Denote by $\xi_{ij}:=[\xi_i, \xi_j]$ the left invariant vector field such that at any point $Q$,
$\xi_{ij}|_Q=Q\Big(\begin{array}{cc}
          x_{ij}& y_{ij}\\
          -\overline{y}_{ij}& z_{ij}
        \end{array}\Big)$ for $i, j=1, 2, 3.$ A direct calculation leads to
\begin{eqnarray*}\label{xi{ij}}
x_{ij}&=&x_ix_j-x_jx_i+y_j\overline{y}_i-y_i\overline{y}_j  \\
y_{ij}&=&x_iy_j-x_jy_i+y_iz_j-y_jz_i \\
z_{ij}&=& z_iz_j-z_jz_i+\overline{y}_jy_i-\overline{y}_iy_j,\quad i,j=1,2,3.
\end{eqnarray*}

Since $\xi_1$, $\xi_2$, $\xi_3$ are left invariant, it follows that
\begin{equation*}
  \langle \xi_i, [\xi_j, \xi_k]\rangle = \frac{r_1}{2}\langle x_i, x_{jk}\rangle+\langle y_i, y_{jk}\rangle +\frac{r_2}{2}\langle z_i, z_{jk}\rangle.
\end{equation*}
Thus \begin{eqnarray*}
        && -\langle\xi_1, [\xi_2, \xi_3]\rangle+\langle\xi_2, [\xi_3, \xi_1]\rangle \\
       &=& \frac{r_1}{2}\big(\langle x_1, x_{32}\rangle+\langle x_2, x_{31}\rangle\big)+\big(\langle y_1, y_{32}\rangle+\langle y_2, y_{31}\rangle\big)+\frac{r_2}{2}\big(\langle z_1, z_{32}\rangle+\langle z_2, z_{31}\rangle\big) \\
       &=& \big\langle (1-r_1)(x_1y_2+x_2y_1)+(r_2-1)(y_1z_2+y_2z_1), y_3\big\rangle \\
       &=& 2\langle D_{12}, y_3\rangle,
     \end{eqnarray*}
which yields the connection formula directly.

By the way, we notice that $D(\xi_1, \xi_2)=D(\xi_2, \xi_1)$.
\end{proof}

Next, we compute the sectional curvature of $(Sp(2), g_r)$ and show the following results.
\vspace{5mm}

\noindent
\textbf{Theorem 1.1}
\emph{For the left invariant metric $g_r$ on $Sp(2)$ as above.}

\emph{
(1).The sectional curvature of the metric $g_r$ is nonnegative if and only if $r_1+r_2\leq 2$;}

\emph{
(2).The metric $g_r$ is Einstein if and only if $r_1=r_2=1$ or $\frac{1}{2}$.}

\vspace{5mm}
\noindent
\textbf{Proof of (1):}

Given two left invariant vector fields $\xi_1$, $\xi_2$, by Lemma \ref{connection in Sp2},
\begin{eqnarray*}
 && \langle R(\xi_1, \xi_2)\xi_1, \xi_2\rangle \\
 &=& \big\langle \nabla_{\xi_2}\nabla_{\xi_1}\xi_1-\nabla_{\xi_1}\nabla_{\xi_2}\xi_1+\nabla_{[\xi_1, \xi_2]}\xi_1, \xi_2\big\rangle \\
 &=& \xi_2\langle\nabla_{\xi_1}\xi_1, \xi_2\rangle-\xi_1\langle\nabla_{\xi_2}\xi_1, \xi_2\rangle-\langle \nabla_{\xi_1}\xi_1, \nabla_{\xi_2}\xi_2\rangle
 + \langle \nabla_{\xi_2}\xi_1, \nabla_{\xi_1}\xi_2\rangle+\langle \nabla_{[\xi_1, \xi_2]}\xi_1, \xi_2\rangle\\
 &=& \xi_2\langle D(\xi_1, \xi_1), \xi_2\rangle-\xi_1\langle \frac{1}{2}[\xi_2, \xi_1]+D(\xi_2, \xi_1), \xi_2\rangle-\langle D(\xi_1, \xi_1), D(\xi_2, \xi_2)\rangle\\
 &&+\big\langle \frac{1}{2}[\xi_2, \xi_1]+D(\xi_2, \xi_1), \frac{1}{2}[\xi_1, \xi_2]+D(\xi_1, \xi_2)\big\rangle+\big\langle \frac{1}{2}\big[[\xi_1, \xi_2], \xi_1\big]+D([\xi_1,\xi_2], \xi_1), \xi_2\big\rangle
\end{eqnarray*}
 It is easily seen that $\xi_2\langle D(\xi_1, \xi_1), \xi_2\rangle=0$ and $\xi_1\langle \frac{1}{2}[\xi_2, \xi_1]+D(\xi_2, \xi_1), \xi_2\rangle=0$.
Denote
\begin{eqnarray*}
  R_1 &:=& -\langle D(\xi_1, \xi_1), D(\xi_2, \xi_2)\rangle \\
  R_2 &:=& \big\langle \frac{1}{2}[\xi_2, \xi_1]+D(\xi_2, \xi_1), \frac{1}{2}[\xi_1, \xi_2]+D(\xi_1, \xi_2)\big\rangle\\
  &=&-\frac{1}{4}|[\xi_1, \xi_2]|^2+|D(\xi_1, \xi_2)|^2 \\
  R_3 &:=& \big\langle \frac{1}{2}\big[[\xi_1, \xi_2], \xi_1\big]+D([\xi_1,\xi_2], \xi_1), \xi_2\big\rangle,
\end{eqnarray*}
then $\langle R(\xi_1, \xi_2)\xi_1, \xi_2\rangle=R_1+R_2+R_3$.

For convenience, we denote
\begin{equation*}
\begin{array}{ccc}
      \alpha_1:=y_2\overline{y}_1-y_1\overline{y}_2 & \beta_1:=x_1x_2-x_2x_1 & \gamma_1:=x_1y_2-x_2y_1 \\
      \alpha_2:=\overline{y}_2y_1-\overline{y}_1y_2 & \beta_2:=z_1z_2-z_2z_1 & \gamma_2:=y_1z_2-y_2z_1,
\end{array}
\end{equation*}
thus
\begin{equation*}
  x_{12}=\alpha_1+\beta_1, \quad y_{12}=\gamma_1+\gamma_2, \quad z_{12}=\alpha_2+\beta_2.
\end{equation*}
Using the notations in Lemma \ref{connection in Sp2}, we derive that
\begin{eqnarray*}
  R_1 &=& -\big\langle (1-r_1)x_1y_1+(r_2-1)y_1z_1, (1-r_1)x_2y_2+(r_2-1)y_2z_2\big\rangle\\
  R_2 &=& -\frac{1}{4}|[\xi_1, \xi_2]|^2+|D(\xi_1, \xi_2)|^2, \\
  &=&-\frac{1}{4}|[\xi_1, \xi_2]|^2+|\frac{1}{2}(1-r_1)(x_1y_2+x_2y_1)+\frac{1}{2}(r_2-1)(y_1z_2+y_2z_1)|^2\\
  R_3 &=& \frac{1}{2}\big\langle \big[[\xi_1, \xi_2], \xi_1\big], \xi_2\big\rangle+\langle D([\xi_1,\xi_2], \xi_1), \xi_2\big\rangle\\
  &=& \frac{1}{2}|[\xi_1, \xi_2]|^2+\frac{1-r_1}{4}\big(\langle x_{12}, y_2\overline{y}_1-y_1\overline{y}_2\rangle+\langle y_{12}, 2x_2y_1\rangle\big)\\
   &&+\frac{r_2-1}{4}\big(\langle y_{12}, 2y_1z_2\rangle+\langle z_{12}, \overline{y}_1y_2-\overline{y}_2y_1\rangle\big)\\
  &&+\frac{1-r_1}{2}\big(\langle x_{12}, y_2\overline{y}_1\rangle-\langle y_{12}, x_1y_2\rangle\big)
  +\frac{r_2-1}{2}\big(\langle z_{12}, \overline{y}_1y_2\rangle-\langle y_{12}, y_2z_1\rangle\big)\\
  &=& \frac{1}{2}|[\xi_1, \xi_2]|^2+\frac{1-r_1}{2}\big(\langle \alpha_1+\beta_1, \alpha_1\rangle+\langle \gamma_1+\gamma_2, -\gamma_1\rangle\big)\\
  &&+  \frac{r_2-1}{2}\big(\langle \gamma_1+\gamma_2, \gamma_2\rangle+\langle \alpha_2+\beta_2, -\alpha_2\rangle\big)
\end{eqnarray*}

Therefore,
\begin{eqnarray}\label{sectional cuervature of xi1 xi2}
  &&\langle R(\xi_1, \xi_2)\xi_1, \xi_2\rangle \nonumber \\
 &=&  \frac{1}{4}\big|[\xi_1, \xi_2]\big|^2+|D(\xi_1, \xi_2)|^2-\langle D(\xi_1, \xi_1), D(\xi_2, \xi_2)\rangle \nonumber\\
 &&+\langle \alpha_1+\beta_1, \frac{1-r_1}{2}\alpha_1\rangle+\langle \gamma_1+\gamma_2, \frac{r_1-1}{2}\gamma_1\rangle \nonumber\\
 && +\langle \gamma_1+\gamma_2, \frac{r_2-1}{2}\gamma_2\rangle+\langle \alpha_2+\beta_2, \frac{1-r_2}{2}\alpha_2\rangle \nonumber\\
 &=&  \frac{r_1}{8}|\alpha_1+\beta_1|^2+\frac{1}{4}|\gamma_1+\gamma_2|^2+\frac{r_2}{8}|\alpha_2+\beta_2|^2 \nonumber\\
 &&+|\frac{1}{2}(1-r_1)(x_1y_2+x_2y_1)+\frac{1}{2}(r_2-1)(y_1z_2+y_2z_1)|^2 \nonumber\\
 &&-\big\langle (1-r_1)x_1y_1+(r_2-1)y_1z_1, (1-r_1)x_2y_2+(r_2-1)y_2z_2\big\rangle \nonumber\\
 &&+\langle \alpha_1+\beta_1, \frac{1-r_1}{2}\alpha_1\rangle+\langle \gamma_1+\gamma_2, \frac{r_1-1}{2}\gamma_1\rangle \nonumber\\
 && +\langle \gamma_1+\gamma_2, \frac{r_2-1}{2}\gamma_2\rangle+\langle \alpha_2+\beta_2, \frac{1-r_2}{2}\alpha_2\rangle \nonumber\\
  &=& \frac{1}{4}|r_1\gamma_1+r_2\gamma_2|^2+\frac{r_1}{8}|\beta_1+(3-2r_1)\alpha_1|^2+\frac{r_2}{8}|\beta_2+(3-2r_2)\alpha_2|^2\\
  && + \frac{1}{2}\big((1-r_1)^3+(1-r_2)^3\big)|\alpha_1|^2, \nonumber
\end{eqnarray}
where in the last equality we used $|\alpha_1|=|\alpha_2|$.

Next we prove the sufficiency and necessity for $(Sp(2),g_r)$ to be non-negatively curved. Suppose $r_1+r_2\leq2$. It follows that
$(1-r_1)^3+(1-r_2)^3\geq 0$ and every term in the curvature formula is non-negative. This proves the sufficiency.
As for the necessity, we claim that there always exist two vectors $\xi_p$ $(p=1,2)$ such that $|\alpha_1|^2\neq 0$ and $\langle R(\xi_1,\xi_2)\xi_1,\xi_2\rangle=\frac{1}{2}\Big((1-r_1)^3+(1-r_2)^3\Big)|\alpha_1|^2<0$ if $r_1+r_2>2$.
In fact, by a long but straightforward calculation, we can choose
$$\xi_1=\Big(\begin{array}{cc}
tu\oi &-\oj\\
-\oj&tv\oi
\end{array}\Big), \quad \xi_2=\Big(\begin{array}{cc}
t(r_2-u)\oj &\oi\\
\oi&t(r_1-v)\oj
\end{array}\Big),$$
where $t$ is any sufficiently large number and $u,v$ are any real solutions for the quadratic equations
$t^2u(r_2-u)=2r_1-3$ and $t^2v(r_1-v)=2r_2-3$, respectively.
Now one can verify directly that for these two vectors $\xi_1,\xi_2$,
$$\alpha_1\neq0, \quad r_1\gamma_1+r_2\gamma_2=0, \quad \beta_p+(3-2r_p)\alpha_p=0, \quad p=1,2.$$
The proof is now complete.

\hfill $\Box$


We are now in a position to prove the second part of Theorem \ref{Einstein}.

\noindent
\textbf{Proof of (2):}

Under the metric (\ref{left inv. metric on Sp2}), we can fix an orthonormal frame $\{e_p\in\mathfrak{sp}(2)\mid p=1,\ldots,10\}$ as follows:
\begin{equation*}\label{basis-sp2}
\begin{array}{llll}
e_1=\sqrt{\frac{2}{r_1}} \Big(\begin{array}{cc}
\oi & 0\\
0&0
\end{array}\Big), & e_2=\sqrt{\frac{2}{r_1}} \Big(\begin{array}{cc}
\oj & 0\\
0&0
\end{array}\Big), & e_3=\sqrt{\frac{2}{r_1}} \Big(\begin{array}{cc}
\ok & 0\\
0&0
\end{array}\Big),&\\
e_4= \Big(\begin{array}{cc}
0 & 1\\
-1&0
\end{array}\Big),&e_5=\Big(\begin{array}{cc}
0 & \oi\\
\oi&0
\end{array}\Big),&e_6=\Big(\begin{array}{cc}
0 & \oj\\
\oj&0
\end{array}\Big),&e_7=\Big(\begin{array}{cc}
0 & \ok\\
\ok&0
\end{array}\Big),\\
e_8=\sqrt{\frac{2}{r_2}} \Big(\begin{array}{cc}
0 & 0\\
0& \oi
\end{array}\Big), & e_9=\sqrt{\frac{2}{r_2}} \Big(\begin{array}{cc}
0 & 0\\
0& \oj
\end{array}\Big), & e_{10}=\sqrt{\frac{2}{r_2}} \Big(\begin{array}{cc}
0 & 0\\
0& \ok
\end{array}\Big).&
\end{array}
\end{equation*}

We first consider the necessary condition for $(Sp(2), g_r)$ to be Einstein.

\noindent
$(1)$ For $e_1=\sqrt{\frac{2}{r_1}} \Big(\begin{array}{cc}
\oi & 0\\
0&0
\end{array}\Big)$, we can immediately find that $\alpha_1=\alpha_2=\beta_2=\gamma_2=0$.
Moreover,
\begin{eqnarray*}
  \beta_1&=&\sqrt{\frac{2}{r_1}}(\oi x_p-x_p\oi)=\left\{\begin{array}{cc}
  \frac{4}{r_1}\ok& p=2\\
  -\frac{4}{r_1}\oj& p=3\\
  0& p=4,\dots,10
  \end{array}\right.\\
  \gamma_1&=&\sqrt{\frac{2}{r_1}}y_p\oi=\left\{\begin{array}{cc}
  0& p=2,3,8,9,10\\
  \sqrt{\frac{2}{r_1}}\oi& p=4\\
  -\sqrt{\frac{2}{r_1}}& p=5\\
  \sqrt{\frac{2}{r_1}}\ok&p=6\\
  -\sqrt{\frac{2}{r_1}}\oj&p=7
\end{array}\right.
\end{eqnarray*}
Therefore, from (\ref{sectional cuervature of xi1 xi2}), it follows that the sectional curvature of the plane spanned by $e_1$ and $e_p$ is
\begin{equation}\label{Ke1 ep}
K(e_1, e_p)=\frac{1}{4}r_1^2|\gamma_1|^2+\frac{r_1}{8}|\beta_1|^2=\left\{\begin{array}{ccl}
\frac{2}{r_1} & & p=2, 3;\\
\frac{r_1}{2} & & p=4,5,6,7;\\
0 && p=8,9,10.
\end{array}\right.
\end{equation}
Hence $Ric(e_1)=2r_1+\frac{4}{r_1}$. Similarly, $Ric(e_2)=Ric(e_3)=2r_1+\frac{4}{r_1}$.

\noindent
$(2)$ For $e_4=\Big(\begin{array}{cc}
0 & 1\\
-1&0
\end{array}\Big)$, it is easily observe that $\beta_1=\beta_2=0$ for any $p$.
Moreover,
\begin{eqnarray*}
\alpha_1&=& y_p\overline{y}_4-y_4\overline{y}_p=2\mathrm{Im} y_p=\left\{\begin{array}{lcl} 0&& p=1,2,3,8,9,10.\\ 2\oi&& p=5\\ 2\oj&& p=6\\ 2\ok& & p=7.\end{array}\right. \\
\alpha_2&=& \overline{y}_p y_4-\overline{y}_4y_p=-2\mathrm{Im} y_p=-\alpha_1 \\
\gamma_1&=&x_4y_p-x_py_4=-x_p=\left\{\begin{array}{lcl} -\sqrt{\frac{2}{r_1}} \oi&& p=1\\ -\sqrt{\frac{2}{r_1}} \oj&& p=2\\ -\sqrt{\frac{2}{r_1}} \ok&& p=3\\ 0& &p=5,\cdots, 10.\end{array}\right. \\
\gamma_2&=& y_4z_p-y_pz_4=z_p=\left\{\begin{array}{rcl} 0&& p=1,2,3,5,6,7.\\ \sqrt{\frac{2}{r_2}}\oi&& p=8\\ \sqrt{\frac{2}{r_2}}\oj&& p=9\\ \sqrt{\frac{2}{r_2}}\ok&& p=10.\end{array}\right.
\end{eqnarray*}
Therefore, from (\ref{sectional cuervature of xi1 xi2}), it follows that the sectional curvature of the plane spanned by $e_4$ and $e_p$ is
\begin{eqnarray}\label{Ke4 ep}
  K(e_4,e_p) &=& \frac{1}{4}|r_1\gamma_1+r_2\gamma_2|^2+\frac{r_1}{8}|(3-2r_1)\alpha_1|^2+\frac{r_2}{8}|(3-2r_2)\alpha_2|^2 \\
   && +\frac{1}{2}\big((1-r_1)^3+(1-r_2)^3\big)|\alpha_1|^2\nonumber\\
   &=& \left\{\begin{array}{ccl}
\frac{r_1}{2} & & p=1, 2, 3;\\
4-\frac{3}{2}(r_1+r_2) & & p=5,6,7;\\
\frac{r_2}{2} & & p=8,9,10.
\end{array}\right.\nonumber
\end{eqnarray}
Hence $Ric(e_4)=12-3(r_1+r_2)$. Similarly, $Ric(e_5)=Ric(e_6)=Ric(e_7)=12-3(r_1+r_2)$.

$(3)$ For $e_8=\sqrt{\frac{2}{r_2}} \Big(\begin{array}{cc}
0 & 0\\
0& \oi
\end{array}\Big)$, it is easily observed that $\alpha_1=\alpha_2=\beta_1=\gamma_1=0$ for any $p$.
Moreover,
\begin{eqnarray*}
\beta_2&=& z_8z_p-z_pz_8=\sqrt{\frac{2}{r_2}}(\oi z_p-z_p\oi)=\left\{\begin{array}{ccl} 0& &p=1,\cdots,7.\\ 2\sqrt{\frac{2}{r_2}}\ok& & p=9\\ -2\sqrt{\frac{2}{r_2}}\oj& & p=10.\end{array}\right. \\
\gamma_2&=& y_8z_p-y_pz_8=-y_pz_8=\left\{\begin{array}{ccl} 0&& p=1,2,3,8,9,10;\\ -\sqrt{\frac{2}{r_2}}\oi&& p=4;\\ \sqrt{\frac{2}{r_2}}& &p=5;\\ \sqrt{\frac{2}{r_2}}\ok& &p=6;\\-\sqrt{\frac{2}{r_2}}\oj&& p=7.\end{array}\right.
\end{eqnarray*}
Therefore, from (\ref{sectional cuervature of xi1 xi2}), it follows that the sectional curvature of the plane spanned by $e_8$ and $e_p$ is
\begin{equation}\label{Ke8 ep}
K(e_8,e_p)=\frac{1}{4}r_2^2|\gamma_2|^2+\frac{r_2}{8}|\beta_2|^2=\left\{\begin{array}{ccl}
0 & & p=1, 2, 3;\\
\frac{r_2}{2} & & p=4,5,6,7;\\
\frac{2}{r_2}& & p=9,10.
\end{array}\right.
\end{equation}
Hence $Ric(e_8)=2r_2+\frac{4}{r_2}$. Similarly, $Ric(e_9)=Ric(e_{10})=2r_2+\frac{4}{r_2}$.

In summary of $(1), (2), (3)$,
\begin{equation}\label{Ric curvature}
  Ric(e_p)=\left\{\begin{array}{ccl}
2r_1+\frac{4}{r_1} & &p=1,2,3;\\
12-3(r_1+r_2) && p=4,5,6,7;\\
2r_2+\frac{4}{r_2} && p=8,9,10.
\end{array}\right.
\end{equation}

Therefore, $(Sp(2), g_r)$ is Einstein implies that $2r_1+\frac{4}{r_1}=2r_2+\frac{4}{r_2}=12-3(r_1+r_2)$, which further implies $r_1=r_2=1$ or $\frac{1}{2}$.


Conversely, notice that when $r_1=r_2=1$, the metric $g_r$ is bi-invariant, and $(Sp(2), g_r)$ is Einstein.
So we next prove that $r_1=r_2=\frac{1}{2}$ is a sufficient condition for $(Sp(2), g_r)$ to be Einstein. We will need the following
observation to deal with the case.
\begin{eqnarray*}
(Sp(2), g_r)~ \mathrm{is~ Einstein} &\Longleftrightarrow& \exists ~\mathrm{constant}~ c, \mathrm{~such~ that~} Ric(e_i)=c ~~~\forall~ 1\leq i\leq 10,\\
&& \mathrm{and}~\forall~ i\neq j, \sum_{k\neq i,j}K(\frac{e_i+e_j}{\sqrt{2}}, e_k)=c-K(e_i, e_j).
\end{eqnarray*}

In our case $r_1=r_2=\frac{1}{2}$, from (\ref{Ric curvature}), we derive $c=9$. By the results in (\ref{Ke1 ep}), (\ref{Ke4 ep}), (\ref{Ke8 ep}), we need only to calculate $K(\frac{e_i+e_j}{\sqrt{2}}, e_k)=\frac{1}{2}\langle R(e_i+e_j, e_k)(e_i+e_j), e_k\rangle$ with respect to the orthonormal basis:
\begin{equation*}
\begin{array}{cccc}
e_1=2 \Big(\begin{array}{cc}
\oi & 0\\
0&0
\end{array}\Big), & e_2=2 \Big(\begin{array}{cc}
\oj & 0\\
0&0
\end{array}\Big), & e_3=2 \Big(\begin{array}{cc}
\ok & 0\\
0&0
\end{array}\Big),&\\
e_4= \Big(\begin{array}{cc}
0 & 1\\
-1&0
\end{array}\Big),&e_5=\Big(\begin{array}{cc}
0 & \oi\\
\oi&0
\end{array}\Big),&e_6=\Big(\begin{array}{cc}
0 & \oj\\
\oj&0
\end{array}\Big),&e_7=\Big(\begin{array}{cc}
0 & \ok\\
\ok&0
\end{array}\Big),\\
e_8=2 \Big(\begin{array}{cc}
0 & 0\\
0& \oi
\end{array}\Big), & e_9=2\Big(\begin{array}{cc}
0 & 0\\
0& \oj
\end{array}\Big), & e_{10}=2 \Big(\begin{array}{cc}
0 & 0\\
0& \ok
\end{array}\Big).&
\end{array}
\end{equation*}
In fact, it is a direct calculation using the definitions of $\alpha_1, \alpha_2, \beta_1, \beta_2, \gamma_1, \gamma_2$ and the expression of curvature (\ref{sectional cuervature of xi1 xi2}). We will omit the details here.

\hfill $\Box$

\section{geometry of the isoparametric foliation on $\widetilde{N}^{11}$}



Recall the $S^{3}$ action on $M^{14}$ by (\ref{S3 action on M14})
\begin{eqnarray*}
{\phi_1}: S^{3} \times M^{14} &\longrightarrow& M^{14} \nonumber\\
\Big(p,(Q,(t_1, t_2))\Big) &\longmapsto&\left(Q\left(\begin{array}{cc}\overline{p}&\\ &1\end{array}\right), (t_{1}, pt_2)\right),
\end{eqnarray*}
where $p\in S^3$, $Q\in Sp(2)$, $(t_1, t_2)\in S^{4}\subset \mathbb{R} \oplus \mathbb{H}$.

Observe that $\phi_1$ is an isometric free action.
We will study the induced metric on the quotient space $\widetilde{N}^{11}$. Consider the embedding (\ref{Sp2 into N11}):
\begin{eqnarray*}
\Phi:  Sp(2)\times (0, \pi) &\longrightarrow& \quad\widetilde{N}^{11} \nonumber\\
~~~~  (Q, ~~\quad\theta) \quad &\longmapsto&  [(Q,(\cos\theta, \sin\theta))]
\end{eqnarray*}
At any point $A=(Q,\theta)$, and $X=(Q\xi, c)\in T_A(Sp(2)\times (0,\pi))$, we will consider the metric:
\begin{equation*}
|X|^2:=|\Phi_{\ast}X|_{\widetilde{N}}^2.
\end{equation*}
So our next task is to calculate $|\Phi_{\ast}X|_{\widetilde{N}}^2$ by virtue of the metric induced from the Riemannian submersion $\pi_1$.

At the point $B=(Q, (\cos\theta, \sin\theta))\in M^{14}$, $\widetilde{\Phi_{\ast}X}=(Q\xi, (-c\sin\theta, c\cos\theta))$ is a lift of $\Phi_{\ast}X$. To compute $|\widetilde{\Phi_{\ast}X}^{\mathcal{H}}|_{M}^2$, we need to
decompose $\widetilde{\Phi_{\ast}X}$ along the vertical distribution $\mathcal{V}$ and the horizontal distribution $\mathcal{H}$. In fact, when $\cos\theta\neq 0$, the vertical distribution is
\begin{equation*}
  \mathcal{V}_B=\Big\{\left( -Q\left(\begin{array}{cc}
                                  u &  \\
                                   & 0
                                \end{array}
  \right), (0, \sin\theta~u)\right)~\big|~u\in \mathbb{H}, \mathrm{Re}(u)=0\Big\}
\end{equation*} and the horizontal distribution is
\begin{equation*}
  \mathcal{H}_B=\Big\{\left( Q\left(\begin{array}{cc}
                                  \frac{2}{r_1}\sin\theta~\mathrm{Im}(v) & y \\
                                  -\overline{y} & z
                                \end{array}
  \right), (-\frac{\sin\theta}{\cos\theta}~\mathrm{Re}(v), v)\right)~\big|~v, y, z\in \mathbb{H}, \mathrm{Re}(z)=0\Big\}.
\end{equation*}
When $\cos\theta= 0$, the vertical and horizontal distributions are
\begin{eqnarray*}
  \mathcal{V}_B&=&\Big\{\left( -Q\left(\begin{array}{cc}
                                  u &  \\
                                   & 0
                                \end{array}
  \right), (0, u)\right)~\big|~u\in \mathbb{H}, \mathrm{Re}(u)=0\Big\}\\
  \mathcal{H}_B&=&\Big\{\left( Q\left(\begin{array}{cc}
                                  \frac{2}{r_1} v & y \\
                                  -\overline{y} & z
                                \end{array}
  \right), (t, v)\right)~\big|~t\in \mathbb{R}, v, y, z\in \mathbb{H}, \mathrm{Re}(v)=\mathrm{Re}(z)=0\Big\}.
\end{eqnarray*}
Decomposing $\widetilde{\Phi_{\ast}X}=\widetilde{\Phi_{\ast}X}^{\mathcal{V}}+\widetilde{\Phi_{\ast}X}^{\mathcal{H}}$, we see that
\begin{equation*}
  (\widetilde{\Phi_{\ast}X})^{\mathcal{H}}=\left( Q\left(\begin{array}{cc}
                                  \frac{2}{r_1}\sin^2\theta~u & y \\
                                  -\overline{y} & z
                                \end{array}
  \right), (-c\sin\theta, c\cos\theta+\sin\theta u)\right) ~\mathrm{with}~u=\frac{x}{1+\frac{2}{r_1}\sin^2\theta},
\end{equation*}
and thus
\begin{eqnarray*}\label{induced metric on N11}
  |X|^2&=&|\widetilde{\Phi_{\ast}X}^{\mathcal{H}}|^2=\frac{\sin^2\theta}{1+\frac{2}{r_1}\sin^2\theta}|x|^2+|y|^2+\frac{r_2}{2}|z|^2+c^2\\
  &:=& \lambda(\theta)|x|^2+|y|^2+\frac{r_2}{2}|z|^2+c^2.
\end{eqnarray*}
Besides, by the Gray-O'Neill formula, the sectional curvature of $\widetilde{N}^{11}$ is non-negative.

Now we are in position to investigate the orbit geometry on $\widetilde{N}^{11}$.
\vspace{5mm}

\noindent
\textbf{Theorem 1.3.}
\emph{(1). The principal orbit $Sp(2)_{\theta}$ with $\theta\in(0,\pi)$ of the action (\ref{Sp2 action on N11}) is an isoparametric hypersurface in $\widetilde{N}^{11}$ with principal curvatures
$\frac{\cot\theta}{1+\frac{2}{r_1}\sin^2\theta}$
of multiplicity $3$ and $0$ of multiplicity $7$. In particular, the fixed point of the involution $\rho$, i.e., the hypersurface $Sp(2)_{\frac{\pi}{2}}$ is totally geodesic in $\widetilde{N}^{11}$.}

\emph{(2). The singular orbits of the action (\ref{Sp2 action on N11}), which are focal submanifolds of this isoparametric foliation, are diffeomorphic to $S^7$ and totally geodesic in $\widetilde{N}^{11}$. }

\vspace{5mm}
\noindent
\textbf{Proof: }

As we mentioned in the introduction, we can construct an isoparametric foliation on $\widetilde{N}^{11}$ from the point view of cohomogeneity one action, that is, the action of $Sp(2)$ on $\widetilde{N}^{11}$ (\ref{Sp2 action on N11})
\begin{eqnarray*}
\Psi: \quad\, Sp(2)\,\,\times \,\,\widetilde{N}^{11}\,\quad & \longrightarrow& \widetilde{N}^{11}  \\
  P, \,[(Q, (\cos\theta, \sin\theta))]& \longmapsto & [(PQ, (\cos\theta, \sin\theta))] .\nonumber
\end{eqnarray*}
The principal orbits $Sp(2)_{\theta}=\Phi(Sp(2)\times \{\theta\})$, which are diffeomorphic to $Sp(2)$, are isoparametric hypersurfaces in $\widetilde{N}^{11}$. So we only need to calculate the principal curvatures of $Sp(2)_{\theta}$.

For any vector fields $X_1=(Q\xi_1, c_1)$, $X_2=(Q\xi_2, c_2)$ with $\xi_i~|_Q=Q\left(\begin{array}{cc}
                                                                               x_i & y_i \\
                                                                               -\overline{y}_i & z_i
                                                                             \end{array}
\right)$ and $c_i\in \mathbb{R}$ ($i=1, 2$) on $\widetilde{N}^{11}$, we calculate the connection $\nabla^{\widetilde{N}}_{X_1}X_2$ in a similar way
as in Section 2 and get
\begin{equation}\label{connection on N11}
  \nabla^{\widetilde{N}}_{X_1}X_2=\frac{1}{2}\big[X_1, X_2\big]+E(X_1, X_2),
\end{equation}
where
\begin{eqnarray*}
E(X_1, X_2)&=&\left( Q\left(\begin{array}{cc}
                     E_{11} & E_{12} \\
                     -\overline{E}_{12} & 0
                   \end{array}
 \right), -\frac{\lambda'}{2}\langle x_1, x_2\rangle\right) ~~\mathrm{with}\\
 E_{11}&=&\frac{\lambda'}{2\lambda}(c_1x_2+c_2x_1)\\
 E_{12}&=& (\frac{1}{2}-\lambda)(x_1y_2+x_2y_1)+\frac{r_2-1}{2}(y_1z_2+y_2z_1).
\end{eqnarray*}
Taking the normal direction of the hypersurface $Sp(2)_{\theta}$ as $N=(0, -1)$, by virtue
of $\big[X_1, X_2\big]=(Q\big[\xi_1, \xi_2\big], 0)$, we obtain the second fundamental form of $Sp(2)_{\theta}$:
$B(X_1, X_2)=\frac{\lambda'}{2}\langle x_1, x_2\rangle$.
Thus $X=\Big( Q\Big(\begin{array}{cc}
   x & 0 \\
   0 & 0
\end{array}
\Big),\, 0\Big)$ with $x\in \mathbb{H}$ and $\mathrm{Re}(x)=0$ is an eigenvector of the shape operator and the corresponding
principal curvature is $\frac{\lambda'}{2\lambda}=\frac{\cot\theta}{1+\frac{2}{r_1}\sin^2\theta}$ with multiplicity $3$. The other
eigenvalue is $0$ with multiplicity $7$. In particular, when $\theta=\frac{\pi}{2}$, $\lambda'=0$, which means that the hypersurface $Sp(2)_{\frac{\pi}{2}}$ is totally geodesic.
Moreover, the mean curvature of $Sp(2)_{\theta}$ in $\widetilde{N}^{11}$ is
$H=\frac{3\lambda'}{2\lambda}.$

Clearly, the singular orbits of the action (\ref{Sp2 action on N11}) are $\{[(Q, \pm 1, 0)]~|~Q\in Sp(2)\}$, i.e., the projection of $Sp(2)\times \{(\pm 1, 0)\}$ under the Riemannian submersion $\pi_1$. For convenience, we only focus on one of the focal submanifolds $\{[(Q, (1, 0))]~|~Q\in Sp(2)\}$. From the action (\ref{S3 action on M14}), we know that $(Q, (1, 0))\sim (Q\Big(\begin{array}{cc}
                                                                                         \overline{p} &  \\
                                                                                          & 1
                                                                                       \end{array}
\Big), (1, 0))$, which induces an isometric free action
\begin{eqnarray*}\label{S3 action on Sp2}
{\phi'_1}: S^{3} \times Sp(2) &\longrightarrow& Sp(2) \nonumber\\
p,\qquad Q\, &\longmapsto& Q\left(\begin{array}{cc}\overline{p}&\\ &1\end{array}\right),
\end{eqnarray*}
and furthermore a Riemannian submersion:
\begin{eqnarray*}\label{S3 action on Sp2}
\pi'_1: \quad Sp(2)\quad &\longrightarrow& \quad S^7 \nonumber\\
Q=\left(\begin{array}{cc}a&b\\ c&d\end{array}\right) &\longmapsto& \left(\begin{array}{c}b\\ d\end{array}\right).
\end{eqnarray*}
Through $Q$, we have the vertical and horizontal distributions as follows
\begin{eqnarray*}
  \mathcal{V}_Q&=&\Big\{ Q\left(\begin{array}{cc}
                                  u &  \\
                                   & 0
                                \end{array}
  \right)~\big|~u\in \mathbb{H}, \mathrm{Re}(u)=0\Big\}\\
  \mathcal{H}_Q&=&\Big\{Q\left(\begin{array}{cc}
                                  0 & y \\
                                  -\overline{y} & z
                                \end{array}
  \right)~\big|~y, z\in \mathbb{H}, \mathrm{Re}(z)=0\Big\}.
\end{eqnarray*}
Thus for any $Q\xi\in T_QSp(2)$, if $Q\xi\in \mathcal{H}_Q$, then by the left invariant metric (\ref{left inv. metric on Sp2}) on $Sp(2)$,
we obtain
\begin{equation*}
  \big|Q\xi\big|^2=|y|^2+\frac{r_2}{2}|z|^2.
\end{equation*}
Since $r_2$ is less than $2$ in our case, the focal submanifold $S^7$ is not a round sphere.

Observe that $S^7$ is a submanifold of $\widetilde{N}^{11}$. Moreover, since the inverse images
$\pi_1^{-1}S^7\cong Sp(2)$ of the focal submanifolds are totally geodesic in $Sp(2)\times S^4$, it follows that the focal submanifolds are totally geodesic in $\widetilde{N}^{11}$ (See Proposition 7.1 in \cite{TT95}).

\hfill $\Box$

As a matter of fact, we can give the explicit isoparametric function corresponding to this isoparametric foliation on $\widetilde{N}^{11}$.

\noindent
\textbf{Proposition 1.4.}
\emph{The function $\widetilde{F}: \widetilde{N}^{11}\rightarrow \mathbb{R}$ defined by $\widetilde{F}\circ \pi_1=f$ is an isoparametric function on $\widetilde{N}^{11}$ which satisfies:
\begin{equation}\label{isop func F1}
  \Big\{\begin{array}{ll}
|\nabla^{\widetilde{N}} \widetilde{F}|_{\widetilde{N}}^2=1-\widetilde{F}^2 \\
\quad \Delta^{\widetilde{N}} \widetilde{F}\,=-\left(1+\frac{3}{1+\frac{2}{r_1}(1-\widetilde{F}^2)}\right)\widetilde{F}.
\end{array}
\end{equation}
The corresponding isoparametric foliation is exactly the one in Theorem \ref{Sp2 isoparametric in N11}.}

\vspace{5mm}
\noindent
\textbf{Proof:}

Recall $f$ that defined in (\ref{f on M14}). Project $f$ onto $\widetilde{N}^{11}$ and define $\widetilde{F}: \widetilde{N}^{11}\rightarrow \mathbb{R}$ by $\widetilde{F}\circ \pi_1=f$. The equality $|\nabla^{\widetilde{N}}\widetilde{F}|_{\widetilde{N}}^2=1-\widetilde{F}^2$ has been explained in the introduction. So we are only left to prove the second equality in (\ref{isop func F1}).

At any point $(Q,\theta)\in \widetilde{N}^{11}$, we see that $\nabla^{\widetilde{N}}\widetilde{F}=(0, -\sin\theta)$.
Take an orthonormal basis on $\widetilde{N}^{11}$ as follows:
\begin{equation*}
  \begin{array}{lll}
e_1=\left(Q \left(\begin{array}{cc}
\frac{1}{\sqrt{\lambda(\theta)}}\oi & 0\\
0&0
\end{array}\right), 0 \right), & e_2=\left(Q \left(\begin{array}{cc}
\frac{1}{\sqrt{\lambda(\theta)}}\oj & 0\\
0&0
\end{array}\right), 0 \right), & e_3=\left(Q \left(\begin{array}{cc}
\frac{1}{\sqrt{\lambda(\theta)}}\ok & 0\\
0&0
\end{array}\right), 0 \right)\\
  e_4= \left(Q \left(\begin{array}{cc}
0 & 1\\
-1&0
\end{array}\right), 0 \right),&e_5=\left(Q \left(\begin{array}{cc}
0 & \oi\\
\oi&0
\end{array}\right), 0 \right)
,&e_6=\left(Q \left(\begin{array}{cc}
0 & \oj\\
\oj&0
\end{array}\right), 0 \right) \\
 e_7=\left(Q \left(\begin{array}{cc}
0 & \ok\\
\ok&0
\end{array}\right), 0 \right), & e_8=\left(Q  \Big(\begin{array}{cc}
0 & 0\\
0& \sqrt{\frac{2}{r_2}}\oi
\end{array}\Big), 0\right),&
  e_9=\left(Q  \Big(\begin{array}{cc}
0 & 0\\
0& \sqrt{\frac{2}{r_2}} \oj
\end{array}\Big), 0\right), \\
  e_{10}=\left(Q \Big(\begin{array}{cc}
0 & 0\\
0& \sqrt{\frac{2}{r_2}} \ok
\end{array}\Big), 0\right),& e_{11}=\big(0, 1\big).&
   \end{array}
\end{equation*}
Recalling that we have chosen the normal direction of the hypersurface $Sp(2)_{\theta}$ as $N = (0, -1)$, we will calculate $\widetilde{\Delta}^{\widetilde{N}}\widetilde{F}$ by virtue of the connection (\ref{connection on N11}).
For $i=1,\cdots,10$, noticing $[e_i, \nabla^{\widetilde{N}}\widetilde{F}]=0$, we calculate $E(e_i, \nabla^{\widetilde{N}}\widetilde{F})$ and
find that
\begin{eqnarray*}
  \nabla_{e_1}\nabla^{\widetilde{N}}\widetilde{F}&=&\left(Q\left(\begin{array}{cc}
                                                                  \frac{\lambda'\sin\theta}{2\lambda\sqrt{\lambda}}\oi & 0 \\
                                                                  0 & 0
                                                                \end{array}
  \right), 0\right),\nabla_{e_2}\nabla^{\widetilde{N}}\widetilde{F}=\left(Q\left(\begin{array}{cc}
                                                                 \frac{\lambda'\sin\theta}{2\lambda\sqrt{\lambda}}\oj & 0 \\
                                                                  0 & 0
                                                                \end{array}
  \right), 0\right)
  \\
  \nabla_{e_3}\nabla^{\widetilde{N}}\widetilde{F}&=&\left(Q\left(\begin{array}{cc}
                                                                  \frac{\lambda'\sin\theta}{2\lambda\sqrt{\lambda}}\ok & 0 \\
                                                                  0 & 0
                                                                \end{array}
  \right), 0\right),
  \nabla_{e_4}\nabla^{\widetilde{N}}\widetilde{F}=\cdots=\nabla_{e_{10}}\nabla^{\widetilde{N}}\widetilde{F}=0.
\end{eqnarray*}
Moreover,
\begin{equation*}
  \nabla_{e_{11}}\nabla^{\widetilde{N}}\widetilde{F}=-\nabla_{e_{11}}(\sin\theta e_{11})=(0, -\cos\theta).
\end{equation*}
Therefore,
\begin{eqnarray*}
\widetilde{\Delta}^{\widetilde{N}}\widetilde{F} &=& \sum_{i=1}^{11}\langle \nabla_{e_i}\nabla^{\widetilde{N}}\widetilde{F}, e_i\rangle = -\frac{3\lambda'}{2\lambda}\sin\theta-\cos\theta \\
   &=& -\cos\theta\left( 1+\frac{3}{1+\frac{2}{r_1}\sin^2\theta}\right) = -\widetilde{F}\left( 1+\frac{3}{1+\frac{2}{r_1}(1-\widetilde{F}^2)}\right),
\end{eqnarray*}
which yields (\ref{isop func F1}) and completes the proof of Proposition \ref{isoparametric function F1}.

\hfill $\Box$

\section{Geometry of transnormal system on $N^8$}

In this section, we will first prove the following Theorem.

\noindent
\textbf{Theorem 1.5.}
\emph{(1).The function $F$ defined by $F\circ \pi_2=\widetilde{F}$ is a transnormal function on $N^8$ satisfying $|\nabla^NF|_N^2=1-F^2$. The transnormal hypersurfaces $\Sigma^7_{\theta}$ ($\theta\in (0, \pi)$) are diffeomorphic to the Gromoll-Meyer sphere $\Sigma^7$. Moreover, $\Sigma^7_{\frac{\pi}{2}}$ is totally geodesic in $N^8$. The focal submanifolds $F^{-1}(\pm 1)$ are diffeomorphic to $S^4$, and totally geodesic in $N^8$.}

\emph{(2).For each $\theta\in (0,\pi)$, the induced metric of $\Sigma^7_{\theta}$ in $N^8$ has positive Ricci curvature and quasi-positive sectional curvature simultaneously.}

\vspace{5mm}
\noindent
\textbf{Proof of (1):}

Recall the $S^3$ action on $\widetilde{N}^{11}$ defined in (\ref{S3 action on N11}):
\begin{eqnarray*}
  \phi_2: \,S^3 \times \widetilde{N}^{11}\,\, &\longrightarrow& \quad \widetilde{N}^{11} \\
  q, [(Q, (t_1, t_2))] &\longmapsto& [(\Big(\begin{array}{cc} q &\\ & q\end{array}\Big) Q, (t_1, t_2\overline{q}))] ,\nonumber
\end{eqnarray*}
which is free and isometric.

Define $F: N^8\rightarrow \mathbb{R}$ by $F\circ \pi_2=\widetilde{F}$.
At any point $[(Q, (t_1, t_2))]\in \widetilde{N}^{11}$, we see that the vertical distribution is
\begin{equation*}
  \mathcal{V}_{[(Q, (t_1, t_2))]}=\Big\{\left( \Big(\begin{array}{cc}
                                  u &  \\
                                   & u
                                \end{array}
  \Big)Q, (0, -t_2u)\right)~\big|~u\in \mathbb{H}, \mathrm{Re}(u)=0\Big\}.
\end{equation*}
As the projection of $\nabla^Mf$ on $\widetilde{N}^{11}$, $\nabla^{\widetilde{N}} \widetilde{F}$ is orthogonal to $\mathcal{V}_{[(Q, (t_1, t_2))]}$,
thus a horizontal direction of the Riemannian submersion $\pi_2$ and furthermore
$$|\nabla^NF|_N^2=|\nabla^{\widetilde{N}} \widetilde{F}|_{\widetilde{N}}^2=1-F^2.$$
That is to say, $F$ is a transnormal function on $N^8$. Notice that each transnormal hypersurface of this transnormal system is
a projection of the hypersurface $Sp(2)_{\theta}$ in the isoparametric foliation on $\widetilde{N}^{11}$. Since $Sp(2)_{\frac{\pi}{2}}$ is
totally geodesic in $\widetilde{N}^{11}$, its projection $\Sigma^7_{\frac{\pi}{2}}$ is also totally geodesic.

As for the focal varieties $F^{-1}(\pm 1)$ of $F$, observe that it is the projection of the focal submanifold $S^7$ in $\widetilde{N}^{11}$, and is actually the quotient manifold of the following $S^3\times S^3$-action on $Sp(2)$:
\begin{eqnarray*}
  (S^3 \times S^3)\times Sp(2) \,\, &\longrightarrow& \quad Sp(2) \\
  (p, q, Q) &\longmapsto& \left(\begin{array}{cc}
          p& \\
          & p
        \end{array}\right)Q\left(\begin{array}{cc}
          \overline{q}& \\
          & 1
        \end{array}\right) .\nonumber
\end{eqnarray*}
It is showed in \cite{GM74} that the quotient manifold $Sp(2)/S^3\times S^3$ is diffeomorphic to $S^4$. Moreover, $S^4$ is totally geodesic in $N^8$,  since
$S^7$ is totally geodesic in $\widetilde{N}^{11}$ and $Sp(2)$ is totally geodesic in $M^{14}$ (See Proposition 7.1 in \cite{TT95}).
\hfill $\Box$.

Now we are only left to prove the second part of Theorem \ref{F transnormal}.

\vspace{5mm}
\noindent
\textbf{Proof of (2):}

Fixing $\theta\in (0, \pi)$, for any orthonormal left invariant vector fields $\widetilde{X}=Q\xi\in T_ASp(2)_{\theta}$ with $A=(Q, \theta)\in Sp(2)_{\theta}\subset \widetilde{N}^{11}$, $\xi=\Big(\begin{array}{cc}
                                                        x & y \\
                                                        -\overline{y} & z
                                                      \end{array}
\Big)\in \mathfrak{sp}(2)$,
we find its squared length by the induced metric (\ref{induced metric on N11}) as
\begin{equation*}
  |\widetilde{X}|^2=\frac{\sin^2\theta}{1+\frac{2}{r_1}\sin^2\theta}|x|^2+|y|^2+\frac{r_2}{2}|z|^2:=\frac{\overline{r}_1}{2}|x|^2+|y|^2+\frac{r_2}{2}|z|^2.
\end{equation*}
Notice that $\overline{r}_1=r_1\cdot\frac{2\sin^2\theta}{r_1+2\sin^2\theta}<r_1$, which implies directly $\overline{r}_1+r_2<2$. Thus $Sp(2)_{\theta}$
has non-negative sectional curvatures by part (1) of Theorem \ref{1.1}.

For any orthonormal vectors $X_1, X_2$ on the transnormal hypersurface $\Sigma^7_{\theta}$,
we denote their horizontal lifts in $Sp(2)_{\theta}$ by $\widetilde{X_i}=Q\xi_i\in T_ASp(2)_{\theta}$ $(i=1, 2)$ with $\xi_i=\Big(\begin{array}{cc}
                                                        x_i & y_i \\
                                                       -\overline{y}_i & z_i
                                                      \end{array}
\Big)\in \mathfrak{sp}(2)$. By the Gray-O'Neill formula,
\begin{equation*}
  K(X_1, X_2)\circ \pi_2=\widetilde{K}(\widetilde{X}_1, \widetilde{X}_2)+\frac{3}{4}\|[\widetilde{X}_1, \widetilde{X}_2]\|^2,
\end{equation*}
we get immediately that $ K(X_1, X_2)\geq \widetilde{K}(\widetilde{X}_1, \widetilde{X}_2)\geq 0$.

Next, we will explain that $\widetilde{K}(\widetilde{X}_1, \widetilde{X}_2)$ can not vanish at any point.
From the curvature formula (\ref{sectional cuervature of xi1 xi2}), it follows that
\begin{eqnarray}\label{K on Sp2theta}
  \widetilde{K}(\widetilde{X_1}, \widetilde{X_2}) &=& \widetilde{K}(Q\xi_1, Q\xi_2)=\langle R(Q\xi_1, Q\xi_2)Q\xi_1, Q\xi_2\rangle \nonumber\\
   &=& \frac{1}{4}|\overline{r}_1\gamma_1+r_2\gamma_2|^2+\frac{\overline{r}_1}{8}|\beta_1+(3-2\overline{r}_1)\alpha_1|^2
   +\frac{r_2}{8}|\beta_2+(3-2r_2)\alpha_2|^2\nonumber\\
  && + \frac{1}{2}\big((1-\overline{r}_1)^3+(1-r_2)^3\big)|\alpha_1|^2.
\end{eqnarray}

Therefore,
\begin{equation*}
  \widetilde{K}(\widetilde{X_1}, \widetilde{X_2})=0\quad \Longleftrightarrow \left\{\begin{array}{c}
                                                                                      \alpha_1=\alpha_2=\beta_1=\beta_2=0 \\
                                                                                      \overline{r}_1\gamma_1+r_2\gamma_2=0.
                                                                                    \end{array}\right.
\end{equation*}

Observe that
\begin{equation}\label{parallel}
\alpha_1=\alpha_2=\beta_1=\beta_2=0\,\, \Longrightarrow x_1\sslash x_2,\, y_1\sslash y_2,\, z_1 \sslash z_2.
\end{equation}
Since $\Sigma^7_{\theta}$ is $7$-dimensional, we can find $\xi_0,\cdots,\xi_6$ with $\langle \xi_i, \xi_j\rangle=\delta_{ij}$ and $\widetilde{X_i}=Q\xi_i$ $(i, j=0,\cdots 6)$.

Without loss of generality, we suppose $Ric(\pi_{2{\ast}}\widetilde{X_0})=0$, which implies that $\widetilde{K}(\widetilde{X_0}, \widetilde{X_i})=0$ for any $i=1,\cdots, 6$. Then we discuss case by case:

\noindent
\textbf{Case 1:} $x_0\neq0$, $y_0\neq 0$ and $z_0\neq 0$. By virtue of the observation (\ref{parallel}), there exists $x, y, z\in \mathbb{H}$ with
$|x|=|y|=|z|=1$, such that $x_i=\lambda_ix$, $y_i=\tau_iy$ and $z_i=\mu_iz$ for $i=1,\cdots,6$, where $\lambda_i, \tau_i, \mu_i\in\mathbb{R}$.
Here $(\lambda_i, \tau_i, \mu_i)\in \mathbb{R}^{3}$, but $\dim Span\{\xi_1,\cdots,\xi_6\}=6$, which is a contradiction.

\noindent
\textbf{Case 2:} $x_0\neq0$, $y_0\neq 0$ and $z_0= 0$, i.e., there exists $x, y\in \mathbb{H}$ with $|x|=|y|=1$ such that $\xi_0=(\lambda_0 x, \tau_0 y, 0)$ with $\lambda_0,\tau_0\in\mathbb{R}$, $\lambda_0\tau_0\neq 0$. Then the observation (\ref{parallel}) leads to $\xi_i=(\lambda_ix,\tau_i y, z_i)$ $(i=1,\cdots, 6)$. However, $(\lambda_i, \tau_i, z_i)\in \mathbb{R}^{5}$, which is a contradiction.

\noindent
\textbf{Case 3:} $x_0\neq0$, $y_0= 0$ and $z_0\neq 0$, i.e., there exists $x, z\in \mathbb{H}$ with $|x|=|z|=1$ such that $\xi_0=(\lambda_0 x, 0, \mu_0z)$ with $\lambda_0,\mu_0\in\mathbb{R}$, $\lambda_0\mu_0\neq 0$. Here $(\lambda_i, y_i, \mu_i)\in \mathbb{R}^{6}$. By $0=\langle \xi_0, \xi_i\rangle=\frac{\overline{r}_1}{2}\lambda_0\lambda_i+\frac{r_2}{2}\mu_0\mu_i$, $i=1,\cdots, 6$, we see $\lambda_i$ and $\mu_i$ are in proportion, which is a
contradiction.

\noindent
\textbf{Case 4:} $x_0=0$, $y_0\neq0$ and $z_0\neq 0$,  similar as in case 2.

\noindent
\textbf{Case 5:} $x_0=0$, $y_0=0$ and $z_0\neq 0$, i.e., there exists $z\in \mathbb{H}$ with $|z|=1$ such that $\xi_0=(0, 0, \mu_0z)$ with $\mu_0\in\mathbb{R}$, $\mu_0\neq 0$. Then the observation (\ref{parallel}) leads to $\xi_i=(x_i, y_i, \mu_iz)$ $(i=1,\cdots, 6)$. Then $\langle \xi_0, \xi_i\rangle=\frac{r_2}{2}\langle \mu_iz, \mu_0z\rangle=\frac{r_2}{2}\mu_i\mu_0$ implies that $\mu_i=0$, i.e., $\xi_i=(x_i, y_i, 0)$. In this case,
$\gamma_1=0$, $\gamma_2=-\mu_0y_iz$. Thus the condition $\overline{r}_1\gamma_1+r_2\gamma_2=0$ implies $y_i=0$, i.e., $\xi_i=(x_i, 0, 0)$, which is a contradiction.

\noindent
\textbf{Case 6:} $x_0=0$, $y_0\neq0$ and $z_0= 0$, i.e., there exists $y\in \mathbb{H}$ with $|y|=1$ such that $\xi_0=(0, \tau_0y, 0)$ with $\tau_0\in\mathbb{R}$, $\tau_0\neq 0$. Then the observation (\ref{parallel}) leads to $\xi_i=(x_i, \tau_iy, z_i)$ $(i=1,\cdots, 6)$.
Then $\langle \xi_0, \xi_i\rangle=\langle \tau_0y, \tau_iy\rangle=\tau_0\tau_i$ implies that $\tau_i=0$, i.e., $\xi_i=(x_i, 0, z_i)$.
In this case, $\gamma_1=-\tau_0x_iy$, $\gamma_2=\tau_0yz_i$. Thus the condition $\overline{r_1}\gamma_1+r_2\gamma_2=0$ implies $z_i=\frac{\overline{r_1}}{r_2}\overline{y}x_iy$, i.e., $\xi_i=(x_i, 0, \frac{\overline{r_1}}{r_2}\overline{y}x_iy)$, which is a contradiction.

\noindent
\textbf{Case 7:} $x_0\neq 0$, $y_0=0$ and $z_0= 0$, similar as in case 5.

Consequently, $\widetilde{K}(\widetilde{X_0}, \widetilde{X_i})=0$ $(i=1,\cdots, 6)$ cannot happen simultaneously. In other words, the Ricci curvature is positive at any point of $\Sigma^7_{\theta}$.

At last, we show that the sectional curvature of $\Sigma^7_{\theta}$ at $\pi_2(Id, \theta)$ is positive for all planes.
According to the action
\begin{eqnarray*}
  S^3\times Sp(2)_{\theta}\,\,\, &\longrightarrow& \qquad Sp(2)_{\theta} \\
  p,\quad (Q, \theta) \quad&\longmapsto& (\left(\begin{array}{cc}
          p& \\
          & p
        \end{array}\right)Q\left(\begin{array}{cc}
          \overline{p}& \\
          & 1
        \end{array}\right),\theta),
\end{eqnarray*}
$\pi_2^{-1}([Id])=Id\in Sp(2)_{\theta}$. Moreover, we have
\begin{eqnarray*}
  \mathcal{V}_{Id}&=&\Big\{ \left(\begin{array}{cc}
                                  u &  \\
                                   & u
                                \end{array}
  \right)Id-Id\left(\begin{array}{cc}
                                  u &  \\
                                   & 0
                                \end{array}
  \right)~\big|~u\in \mathbb{H}, \mathrm{Re}(u)=0\Big\}\\
  \mathcal{H}_{Id}&=&\Big\{Q\left(\begin{array}{cc}
                                  x & y \\
                                  -\overline{y} & 0
                                \end{array}
  \right)~\big|~x, y\in \mathbb{H}, \mathrm{Re}(x)=0\Big\}.
\end{eqnarray*}

For $\xi_i=\Big(\begin{array}{cc}
                                  x_i & y_i \\
                                  -\overline{y}_i & 0
                                \end{array}
  \Big)\in \mathcal{H}_{Id}$, $i=1, 2$, by virtue of the sectional curvature formula (\ref{K on Sp2theta}),
$\widetilde{K}(\xi_1, \xi_2)=0$ implies that $\alpha_1=\alpha_2=\beta_1=\beta_2=0$, and $\gamma_1=0$ since $\gamma_2=0$ in this case. Thus $x_1\sslash x_2, y_1\sslash y_2$, and furtherer $\xi_1\sslash \xi_2$, which proves our assertion.

\hfill $\Box$

\begin{ack}
The authors would like to thank Jianquan Ge for helpful discussions.
\end{ack}


\begin{thebibliography}{123}
\bibitem[CCJ07]{CCJ07}
T. E. Cecil, Q. S. Chi, and G. R. Jensen, \emph{Isoparametric
hypersurfaces with four principal curvatures}, Ann. Math.
\textbf{166} (2007), no. 1, 1--76.

\bibitem[Chi13]{Chi13}
Q.S. Chi, \emph{Isoparametric hypersurfaces with four principal curvatures, III}, J. Diff. Geom. \textbf{94} (2013), 487--522.

\bibitem[Chi16]{Chi16}
Q. S. Chi, \emph{Isoparametric hypersurfaces with four principal curvatures, IV}, to appear in J. Diff.Geom.,
arXiv: 1605.00976, 2016.

\bibitem[DN85]{DN85}
J. Dorfmeister and E. Neher, \emph{Isoparametric hypersurfaces, case $g = 6, m =1$}, Comm. in Algebra \textbf{13} (1985), 2299--2368.

\bibitem[GM74]{GM74}
D. Gromoll and W. Meyer, \emph{An exotic sphere with nonnegative sectional curvature}, Ann. Math. \textbf{100} (1974), 401--406.

\bibitem[GT13]{GT13}
J.Q. Ge and Z.Z. Tang, \emph{Isoparametric functions and exotic
spheres}, J. Reine Angew. Math. \textbf{683} (2013), 161-180.

\bibitem[GTY20]{GTY20}
J. Q. Ge, Z. Z. Tang and W. J. Yan, \emph{Normal Scalar Curvature Inequality on the Focal
Submanifolds of Isoparametric Hypersurfaces}, Int. Math. Res. Notices, Vol. 2020, No. 2, 422--465.

\bibitem[GZ00]{GZ00}
K. Grove and W. Ziller, \emph{Curvatures and symmetry of Milnor spheres}, Ann. Math. \textbf{152} (2000), 331--367.

\bibitem[Mil56]{Mil56}
J. Milnor, \emph{On manifolds homeomorphic to the 7-sphere}, Ann. Math., \textbf{64} (1956), 399--405.

\bibitem[Miy13]{Miy13}
R. Miyaoka, \emph{Isoparametric hypersurfaces with (g,m) = (6,2)},
Ann. Math. \textbf{177} (2013), 53--110.

\bibitem[Miy16]{Miy16}
R. Miyaoka, \emph{Errata of `` isoparametric hypersurfaces with (g, m) = (6,
2) "}, Ann. Math. \textbf{183} (2016), 1057--1071.


\bibitem[QT15]{QT15}
C. Qian and Z.Z. Tang, \emph{Isoparametric functions on exotic spheres}, Adv. Math.,  \textbf{272}(2015), 611--629.

\bibitem[QTY13]{QTY13}
C. Qian, Z. Z. Tang and W. J. Yan, \emph{New examples of Willmore
submanifolds in the unit sphere  via isoparametric functions, II},
Ann. Glob. Anal. Geom. \textbf{43} (2013), 47--62.

\bibitem[Wan87]{Wan87}
 Q. M. Wang, \emph{Isoparametric functions on Riemannian manifolds I}. Math. Ann. \textbf{277}(1987), 639--646.

\bibitem[TT95]{TT95}
C. L. Terng and G. Thorbergsson, Submanifold geometry in symmetric spaces, J. Diff. Geom. \textbf{42} (1995), 665--718.

\bibitem[TXY14]{TXY14}
Z. Z. Tang, Y. Q. Xie and W. J. Yan, \emph{Isoparametric foliation and Yau conjecture on the first eigenvalue, II}, J. Funct. Anal., \textbf{266} (2014), 6174--6199.

\bibitem[TY13]{TY13}
Z. Z. Tang and W. J. Yan, \emph{Isoparametric foliation and Yau conjecture on the first eigenvalue}, J. Diff. Geom., \textbf{94}(2013), 521--540.

\bibitem[TY15]{TY15}
Z .Z. Tang and W. J. Yan, \emph{Isoparametric foliation and a problem of Besse on generalizations of Einstein condition},
Adv. Math. \textbf{285} (2015), 1970--2000.
\end{thebibliography}
\end{document}